\documentclass[01pt]{amsart}
\usepackage{amssymb,amsmath,amsthm,mathrsfs,graphicx,color,fullpage,tikz}

\newcommand{\Tr}{{\rm Tr}}

\newcommand{\PGL}{{\rm PGL}}

\newcommand{\G}{{\Gamma}}
\newcommand{\g}{{\gamma}}
\renewcommand{\dot}{{}}
\newcommand{\Ind}{{\rm Ind}}

\newcommand{\C}{\mathbb{C}}
\newcommand{\oo}{\mathcal{O}}
\renewcommand{\P}{\mathcal{P}}
\usepackage{multirow}
\newtheorem{thm}{Theorem}[subsection]
\newtheorem{theorem}[thm]{Theorem}

\newtheorem{corollary}[thm]{Corollary}
\newtheorem{proposition}[thm]{Proposition}
\numberwithin{equation}{section} \theoremstyle{definition}

\newtheorem*{remark}{Remark}

\begin{document}

\def \Li {\textcolor{red}}
\def \Kang {\textcolor{blue}}

\title[Artin $L$-functions on PGL$_3$]{Artin $L$-functions on PGL$_3$}
\author{Ming-Hsuan Kang and Wen-Ching Winnie Li}
\address{Ming-Hsuan Kang\\ Department of Applied Mathematics\\National Chiao-Tung University\\
Hsinchu, Taiwan} \email{\tt mhkang@nctu.edu.tw}
\address{Wen-Ching Winnie Li\\ Department of Mathematics\\ The Pennsylvania State University\\
University Park, PA 16802 and\\National Center for Theoretical Sciences\\ National Tsing Hua University\\Hsinchu, Taiwan} \email{\tt wli@math.psu.edu}
\thanks{The research of the first author is supported by the NSC grant 100-2115-M-009-008-MY2 and 102-2115-M-009 -005.
The research of the second author is partially supported by the NSF grant DMS-1101368.}

\subjclass[2000]{Primary: 22E35; Secondary: 11F70 }

\begin{abstract}
We study Artin $L$-functions on a finite $2$-dimensional complex $X_\G$ arising from $\PGL_3$ attached to finite-dimensional representations $\rho$ of its fundamental group. Some key properties, such as 
 rationality, functional equation, and invariance under induction,  of these functions are proved. Moreover, using a cohomological argument, we establish a connection between 
the Artin $L$- functions and the $L$-function of $\rho$, extending the identity on zeta functions of $X_\G$ obtained in \cite{KL, KLW}.
\end{abstract}

\maketitle

\section{Introduction}

Let $F$ be a nonarchimedean local field with the ring of integers $\oo$ and a uniformizer $\pi$ such that its residue field $\kappa =\oo / \pi \oo$ has cardinality $q$. The building
attached to the group $G=\PGL_3(F)$ is a contractible 2-dimensional simplicial complex $X$, whose vertices are homothety classes of $\oo$ lattices of rank-$3$ in the $3$-dimensional vector space $F^3$. The group $G$ acts transitively on $X$ as automorphisms.

Fix a discrete cocompact torsion-free subgroup $\Gamma$ of $G$ so that $X_\G= \G \backslash X$ is a finite complex locally isomorphic to $X$. Then $X_\G$ has $X$ as its universal cover and $\G$ as its fundamental group.
Two kinds of zeta functions, the edge zeta function and the chamber zeta function, on $X_\G$ were considered in \cite{KL,KLW}. They extend the Ihara zeta functions for graphs to finite two-dimensional complexes. {Analogous to the two expressions for the Ihara zeta function in terms of the vertex adjacency operator by Ihara  \cite{Ih} and edge adjacency operator by Hashimoto \cite{Ha1}, the edge and chamber zeta functions for $X_\G$ are shown to be rational functions and they satisfy an identity involving operators on the vertices, edges and chambers, resembling the zeta function for a smooth irreducible projective surface defined over a finite field. Along a similar vein, the edge and chamber zeta functions for finite quotients of the building of another rank two group $Sp_4(F)$ have been investigated in \cite{FLW}, where rationality and identities for the zeta functions are also obtained, but they are more complicated due to the nature of the group.}

Let $\rho$ be a $d$-dimensional 
representation of $\G$ acting on the space $V_\rho$ over $\mathbb C$. In this paper we study the $i$-th Artin $L$-function of $X_\G$ attached to $\rho$ for $i=1, 2$ defined by
$$L_{i}(X_\G,\rho, u) =   \prod_{[\mathfrak{p}]}  \det\left( I- \rho(Frob_{[\mathfrak{p}]}) u^{l_A(\mathfrak{p})} \right)^{-1},$$
where $I$ is the identity $d \times d$ matrix, $[\mathfrak{p}]$, which plays the role of an $i$-dimensional prime, runs through all equivalence classes of primitive  uni-type closed $i$-dimensional geodesics $\mathfrak{p}$
in $X_\G$,  $l_A(\mathfrak{p})$ is the algebraic length of $\mathfrak{p}$, and  $Frob_{[\mathfrak{p}]}$ is a conjugacy class in $\G$ associated to the \lq\lq prime" $[\mathfrak{p}]$. See \S3.3 and \S4.4 for detailed definitions.  When $\rho$ is the trivial representation of $\G$, the above $L$-functions coincide with the edge and chamber zeta functions studied in \cite{KL, KLW}.

Denote by $N_i$ the number of $i$-dimensional simplices in $X_\G$. The Euler characteristic $\chi(X_\G)$ of the complex $X_\G$ is equal to $N_0-N_1+N_2$.
We summarize the main properties of these Artin $L$-functions.

\begin{thm}\label{z1}
$L_{1}(X_\G,\rho, u)$ converges absolutely {for $|u|$ small enough} to a rational function of the form
$$L_{1}(X_\G,\rho, u)= \frac{1}{\det(I- A_E(\rho, u))}, $$
where $A_E(\rho, u)$ is an edge adjacency operator acting on a free $\C[u]$-module of rank $2dN_1$. Consequently, $L_{1}(X_\G,\rho, u)$ has a meromorphic continuation to the whole $u$-plane, with reciprocal equal to a polynomial of degree $2dN_1$.
\end{thm}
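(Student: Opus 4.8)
The plan is to convert the Euler product defining $L_1(X_\G,\rho,u)$ into a trace identity, by combining the formal expansion $-\log\det(I-M)=\sum_{m\ge 1}\frac1m\Tr(M^m)$ with the combinatorial ``cycle lemma'' that rewrites a sum over primitive closed geodesics together with all their powers as a single sum over all based closed geodesics. Concretely I will: (i) construct an edge adjacency operator $A_E(\rho,u)$ on $V_\rho$-valued functions on the $2N_1$ directed edges of $X_\G$, encoding admissible geodesic continuations weighted by powers of $u$ and by the monodromy of $\rho$; (ii) prove the trace identity $\Tr\bigl(A_E(\rho,u)^n\bigr)=\sum_c u^{l_A(c)}\,\mathrm{tr}\,\rho(\g_c)$ summed over based closed uni-type geodesics $c$ of algebraic length $n$; (iii) combine (ii) with the cycle lemma to get $\log L_1(X_\G,\rho,u)=\log\det\bigl(I-A_E(\rho,u)\bigr)^{-1}$ as formal power series; and (iv) deduce rationality, meromorphic continuation, and the degree of the reciprocal from the fact that $A_E(\rho,u)$ has entries in $\C[u]$. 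Absolute convergence for $|u|$ small is a soft point: each vertex of the building has a bounded number of admissible geodesic continuations, so the number of primitive closed uni-type geodesics of algebraic length $n$ is $O(c_0^{\,n})$ for some $c_0>0$, and the product converges absolutely on a small disc about $u=0$.

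To build $A_E(\rho,u)$: each $1$-simplex of $X_\G$ yields two directed edges, $2N_1$ in total; lift them to directed edges of the building $X$, on which $\G$ acts freely, and fix orbit representatives. For every ordered pair $(e,e')$ of directed edges of $X_\G$ that may occur consecutively in a uni-type geodesic, lift $e$ to its chosen representative $\tilde e$, let $\tilde e'$ be the unique admissible continuation of $\tilde e$ in $X$, and define $\g_{e,e'}\in\G$ by $\tilde e'=\g_{e,e'}^{-1}\cdot(\text{chosen representative of }e')$. Set
$$\bigl(A_E(\rho,u)f\bigr)(e)=\sum_{e'}u^{\,l_A(e')}\,\rho(\g_{e,e'})\,f(e'),$$
the sum over admissible successors $e'$ of $e$. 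This is a $\C[u]$-linear endomorphism of a free module of rank $2dN_1$ whose matrix entries are monomials in $u$ times the matrices $\rho(\g_{e,e'})$.

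The crux is step (ii). Expanding $A_E(\rho,u)^n$ along a closed string $e_0\to e_1\to\cdots\to e_{n-1}\to e_n=e_0$ of directed edges, the $u$-weights multiply to $u^{l_A(e_1)+\cdots+l_A(e_n)}=u^{l_A(c)}$ and the monodromies multiply, telescopically, to $\rho(\g_{e_0,e_1})\cdots\rho(\g_{e_{n-1},e_0})=\rho(\g_c)$, where $\g_c\in\G$ is a representative of the holonomy $Frob_{[c]}$ of the closed geodesic $c=(e_0,\dots,e_{n-1})$; a cyclic change of base point conjugates $\g_c$, so $\mathrm{tr}\,\rho(\g_c)$ is well defined on the cyclic class. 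Here one must check that closed strings in the directed-edge graph are exactly the based closed uni-type geodesics of $X_\G$ — that is, that imposing the local admissibility condition all the way around the loop is equivalent to the path being a closed geodesic (so that it lifts to a geodesic line in the CAT(0) complex $X$) — and that every such closed geodesic is a power of a unique primitive one $\mathfrak p$, with $l_A(\mathfrak p^k)=k\,l_A(\mathfrak p)$ and $Frob_{[\mathfrak p^k]}=Frob_{[\mathfrak p]}^k$, where torsion-freeness of $\G$ is used. Summing over the $2dN_1$ basis vectors then gives the asserted identity for $\Tr\bigl(A_E(\rho,u)^n\bigr)$.

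For step (iii): the algebraic length of a uni-type geodesic equals its number of edges, so a based closed one of algebraic length $n$ whose cyclic class is the $k$-th power of a primitive $\mathfrak p$ has exactly $l_A(\mathfrak p)$ distinct based representatives; hence
$$\sum_{n\ge 1}\frac1n\Tr\bigl(A_E(\rho,u)^n\bigr)=\sum_{[\mathfrak p]}\sum_{k\ge 1}\frac{l_A(\mathfrak p)}{k\,l_A(\mathfrak p)}\,u^{\,k\,l_A(\mathfrak p)}\,\mathrm{tr}\,\rho\bigl(Frob_{[\mathfrak p]}^k\bigr)=\sum_{[\mathfrak p]}\sum_{k\ge 1}\frac1k\,\mathrm{tr}\,\rho\bigl(Frob_{[\mathfrak p]}^k\bigr)\,u^{\,k\,l_A(\mathfrak p)}.$$
The left side equals $\log\det\bigl(I-A_E(\rho,u)\bigr)^{-1}$ and the right side equals $-\sum_{[\mathfrak p]}\log\det\bigl(I-\rho(Frob_{[\mathfrak p]})u^{l_A(\mathfrak p)}\bigr)=\log L_1(X_\G,\rho,u)$, so $L_1(X_\G,\rho,u)=\det\bigl(I-A_E(\rho,u)\bigr)^{-1}$ as formal power series, hence — both sides having constant term $1$ and $\det(I-A_E(\rho,u))$ being a polynomial — as rational functions. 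Thus $L_1$ converges on a small disc, continues meromorphically to all of $\C$, and its reciprocal is the polynomial $\det(I-A_E(\rho,u))$, of degree at most $2dN_1$; equality of the degree follows by isolating the leading coefficient, using that $X_\G$ is locally the building. I expect step (ii) to be the main obstacle: tracking the lifts to $X$ so that the telescoping product of monodromies genuinely represents $Frob_{[\mathfrak p]}$, and establishing the equivalence between the local admissibility relation on directed edges and the global notion of a closed uni-type geodesic (with primitivity well defined), which is exactly what makes the cycle count in step (iii) correct.
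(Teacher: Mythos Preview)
Your approach is essentially the paper's own proof (\S4.5--4.6): define the twisted edge adjacency operator on $\dot C^1(X_\G,\rho)$, establish the trace identity $\Tr(M_E(\rho,u)^n)=\sum_{\mathfrak p\in\P_1^{(n)}}\Tr(\rho(\g_{\mathfrak p}))\,u^{l_A(\mathfrak p)}$, expand $-\log\det(I-M_E)=\sum_n\frac1n\Tr(M_E^n)$, and regroup by primitive classes using that each primitive cycle has exactly its length many based representatives.

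There is one genuine slip you should fix. You assert that ``the algebraic length of a uni-type geodesic equals its number of edges,'' and accordingly in (ii) you say $\Tr(A_E^n)$ sums over closed geodesics of \emph{algebraic} length $n$, and in (iii) that a primitive $\mathfrak p$ has $l_A(\mathfrak p)$ based representatives. In the $\PGL_3$ setting this is false: a type-$j$ pointed edge has $l_A=j$, so a type-$2$ path of $n$ edges has $l_G=n$ but $l_A=2n$. The operator $A_E^n$ enumerates closed paths of \emph{geometric} length $n$ (that is what your own expansion $e_0\to\cdots\to e_n=e_0$ shows), and the number of based representatives of a primitive $\mathfrak p$ is $l_G(\mathfrak p)$, not $l_A(\mathfrak p)$. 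Your final displayed identity in (iii) survives only because you make the same substitution in numerator and denominator, so $\frac{l_A(\mathfrak p)}{k\,l_A(\mathfrak p)}$ cancels exactly as the correct $\frac{l_G(\mathfrak p)}{k\,l_G(\mathfrak p)}$ would; but as written the intermediate steps are wrong, and the argument would break if the two lengths were ever used asymmetrically. The paper keeps $l_G$ and $l_A$ carefully separate throughout (see the chain of equalities in \S4.6), and you should do the same.
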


\begin{thm}\label{z2}
$L_{2}(X_\G,\rho, u)$ converges absolutely {for $|u|$ small enough} to a rational function of the form
$$L_{2}(X_\G,\rho, u)= \frac{1}{\det(I- A_C(\rho, u))}, $$
where $A_C(\rho, u)$ is a chamber adjacency operator acting on a free $\C[u]$-module of rank $3dN_2$. Consequently, $L_{2}(X_\G,\rho, u)$ has a meromorphic continuation to the whole $u$-plane, with reciprocal equal to a polynomial of degree $3dN_2$.
\end{thm}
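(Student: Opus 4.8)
\emph{Proof plan.} The plan is to carry the Hashimoto--Bass treatment of the Ihara zeta function --- which rewrites a product over closed non-backtracking walks in a graph as the reciprocal characteristic polynomial of an edge operator --- over to closed uni-type $2$-dimensional geodesics in $X_\G$, with the trivial weight replaced by a $\rho$-twisted monodromy weight; Theorem~\ref{z1} is entirely parallel, with edges in place of chambers, $2$ in place of $3$, and rank $2dN_1$ in place of $3dN_2$, so I describe only Theorem~\ref{z2}. First I would fix a base chamber of $X_\G$ and a chamber $\widetilde C$ of $X$ above each chamber $C$ of $X_\G$; then a single legal step of a uni-type geodesic gallery leaving $C$ and entering an adjacent chamber $C'$ determines a unique $g\in\G$ taking $\widetilde{C'}$ to the chamber of $X$ above $C'$ that is adjacent to $\widetilde C$ in the way the step prescribes, and we record the monodromy operator $\rho(g)$ on $V_\rho$. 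By the definitions of \S4.4, a primitive uni-type closed $2$-dimensional geodesic $\mathfrak p$ is a closed non-degenerate uni-type gallery $C_0,C_1,\dots,C_r=C_0$ taken up to cyclic rotation; lifting it through $X$ from $\widetilde{C_0}$ ends at $g_{\mathfrak p}\cdot\widetilde{C_0}$, with $Frob_{[\mathfrak p]}$ the $\G$-conjugacy class of $g_{\mathfrak p}$ and $l_A(\mathfrak p)$ the algebraic length accumulated along the lift; concatenation multiplies $g_{\mathfrak p}$ and adds $l_A(\mathfrak p)$, so $\mathfrak p^k$ has Frobenius $g_{\mathfrak p}^k$ and algebraic length $k\,l_A(\mathfrak p)$.

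Next I would let $W$ be the free $\C[u]$-module with basis indexed by the directed chambers of $X_\G$ --- a chamber together with one of the three admissible ways a uni-type gallery can pass through it --- tensored over $\C$ with $V_\rho$, a module of rank $3dN_2$. Define $A_C(\rho,u)$ on $W$ by sending the basis vector at a directed chamber $\delta$ with $v\in V_\rho$ to $\sum_{\delta'}u^{\ell(\delta\to\delta')}\,\rho(g_{\delta\to\delta'})v$ in the $\delta'$-block, the sum over directed chambers $\delta'$ for which $\delta\to\delta'$ is a \emph{legal} pair of consecutive steps of a uni-type geodesic gallery, $\ell(\delta\to\delta')\ge1$ being the algebraic-length increment of that step and $g_{\delta\to\delta'}\in\G$ the associated monodromy. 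Every entry of the $(3dN_2)\times(3dN_2)$ matrix $A_C(\rho,u)$ is $u^{\ell}$ times a constant, so $\det\bigl(I-A_C(\rho,u)\bigr)\in\C[u]$ is a polynomial of degree at most $3dN_2$; that its degree is exactly $3dN_2$ follows as in the case of the trivial representation treated in \cite{KLW}, to which $A_C(\rho,u)$ specializes when $\rho$ is trivial. Since $A_C(\rho,0)=0$, all power series below live in $\C[[u]]$.

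With these definitions the rest is formal. Expanding the trace of a power over closed walks and using that the monodromies compose along a walk while the $\ell$'s add gives
\[
\Tr\bigl(A_C(\rho,u)^m\bigr)=\sum_{\mathfrak q}\Tr\bigl(\rho(Frob_{\mathfrak q})\bigr)\,u^{\,l_A(\mathfrak q)},
\]
the sum over closed legal walks $\mathfrak q$ of combinatorial length $m$ based at a directed chamber; by the bijection discussed below these are exactly the based closed uni-type geodesics, with $l_A$ and $Frob$ preserved. Each such walk is $\mathfrak p^k$ for a unique primitive class $[\mathfrak p]$, of combinatorial period $r$ with $kr=m$, and has $r$ based representatives; as $r/m=1/k$, dividing by $m$ and summing over $m\ge1$ gives
\[
\sum_{m\ge1}\frac1m\,\Tr\bigl(A_C(\rho,u)^m\bigr)=\sum_{[\mathfrak p]}\sum_{k\ge1}\frac1k\,\Tr\bigl(\rho(Frob_{[\mathfrak p]})^k\bigr)\,u^{\,k\,l_A(\mathfrak p)},
\]
whose right-hand side equals $-\sum_{[\mathfrak p]}\log\det\bigl(I-\rho(Frob_{[\mathfrak p]})u^{l_A(\mathfrak p)}\bigr)=\log L_2(X_\G,\rho,u)$. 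Exponentiating, and using $\exp\sum_{m\ge1}\frac1m\Tr(M^m)=\det(I-M)^{-1}$, gives $L_2(X_\G,\rho,u)=\det\bigl(I-A_C(\rho,u)\bigr)^{-1}$ in $\C[[u]]$. The right-hand side is a rational function regular at $u=0$, hence of positive radius of convergence, and the crude bound that the number of closed uni-type geodesics of algebraic length $n$ is at most the number of closed walks of length $n$ in the finite directed-chamber graph, thus $O(c^n)$, shows the Euler product for $L_2$ converges absolutely for $|u|$ small and equals this function there. The meromorphic continuation, and the fact that the reciprocal is a polynomial of degree $3dN_2$, follow.

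The one substantial step is the combinatorics behind the word \emph{legal}: one must extract from the definition of uni-type $2$-dimensional geodesic in \S4.4 a local rule on ordered pairs of directed chambers under which closed legal walks in the resulting finite transition graph correspond, bijectively and preserving both algebraic length and Frobenius, to closed uni-type geodesics of $X_\G$ --- in particular, so that this local rule forbids every way in which a gallery of chambers in the $\PGL_3$ building can fail to be geodesic. This is the $2$-dimensional analogue of the no-backtracking condition behind Hashimoto's formula, and because galleries can degenerate in more varied ways than walks in a graph can backtrack, establishing the bijection is the real work; the $\rho$-twisting itself is harmless, merely dressing an otherwise purely combinatorial skeleton with $u$-independent monodromy factors.
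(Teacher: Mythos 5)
Your proof follows the same route as the paper's: the paper realizes $A_C(\rho,u)$ as a block matrix $M_C(\rho,u)$ whose rows and columns are indexed by a fundamental domain $S_2$ for the $\G$-action on pointed $2$-simplices of $X$ (equivalently, by the directed chambers of $X_\G$), shows that $\Tr(M_C(\rho,u)^n)$ equals the sum of $\Tr(\rho(\g_{\mathfrak{g}}))\,u^{l_A(\mathfrak{g})}$ over closed uni-type galleries $\mathfrak{g}$ of geometric length $n$, and then applies the trace--$\log$--$\det$ identity and regroups over primitive classes exactly as you do (this is carried out in detail for $L_1$ in \S4.6 and declared verbatim analogous for $L_2$ in \S4.7). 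The ``real work'' you flag at the end is not a gap once the paper's definitions are in place: a uni-type $2$-dimensional geodesic is \emph{defined} in \S3.3 as a sequence of pointed chambers in which each is an out-neighbor of its predecessor, with out-neighbors characterized by the explicit lattice condition of \S3.2, so the local transition rule \emph{is} the definition rather than something to be extracted from a geodesic property; and you need not allow $\ell(\delta\to\delta')\geq 1$, since every pointed chamber has algebraic length exactly $1$, making your weight simply $u$.
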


These zeta functions encode the geometric information of $\G$. The spectral information of $\G$ is characterized by the (local) $L$-function which we now explain.
The Satake parameter attached to an irreducible unramified  representation $\sigma$ of $G$ is a semisimple conjugacy class $s(\sigma)$ in the complex dual group $\hat{G}(\mathbb C)\cong$ SL$_3(\C)$ of $G$. The  $L$-function of $\sigma$ attached to the standard representation of SL$_3(\C)$ is
$$ L(\sigma, u) = \det(I- s(\sigma)u)^{-1}.$$
Since $\G$ is discrete and cocompact, the induced representation $\Ind_\G^G \rho $ can be decomposed into a direct sum of irreducible subrepresentations. Define the (unramified) $L$-function of $\Ind_\G^G \rho$ to be
$$  L(\Ind_\G^G \rho, u) = \prod_{\sigma} L(\sigma, u)^{m(\sigma)}, $$
where $\sigma$ runs through all unramified irreducible representations of $G$ and $m(\sigma)$ is the multiplicity of $\sigma$ in  $\Ind_\G^G \rho$. It is shown in \S 2 that the reciprocal of  $L(\Ind_\G^G \rho, u)$ is a polynomial of degree $3dN_0$, and Proposition 4.3.1 shows its connection with the two vertex adjacency operators on $X_\G$.

The main purpose of this paper is to prove the following identity on $L$-functions.

\begin{theorem} \label{thm1}
With the above notation, for a $d$-dimensional representation $\rho$ of $\G$ we have
$$(1-u^3)^{\chi(X_\G) d}  L(\Ind_\G^G \rho , qu) = \frac{L_{1}(X_\G,\rho, u)}{L_2(X_\G,\rho, -u)}.$$
\end{theorem}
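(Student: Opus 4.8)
The plan is to prove Theorem~\ref{thm1} by a cohomological argument, computing the Euler characteristic of a suitable complex of $\G$-modules (or $G$-modules restricted to $\G$) built from the building $X$ with coefficients twisted by $\rho$, in two different ways. The left-hand side $L(\Ind_\G^G\rho,qu)$ should arise as the ``spectral'' side, expressed via the vertex adjacency operators on $X_\G$ through Proposition~4.3.1 referenced in the excerpt; the right-hand side $L_1/L_2(\cdot,-u)$ is the ``geometric'' side, expressed via $A_E$ and $A_C$ through Theorems~\ref{z1} and \ref{z2}. The key object is the chain complex $0\to C_2 \to C_1 \to C_0 \to 0$ of the finite complex $X_\G$ with local coefficients in $V_\rho$, or better, a parametrized (by $u$) version whose alternating product of characteristic polynomials collapses, by exactness away from cohomology, to a product/quotient of the three adjacency-operator determinants. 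The factor $(1-u^3)^{\chi(X_\G)d}$ and the shift $u\mapsto qu$ must be tracked through the relation between the building-theoretic boundary maps and the combinatorial geodesic-flow operators.

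Concretely, first I would recall from \S4.3 the two vertex operators, say $A_0(u)$ and $A_1(u)$ (adjacency of the two types of neighbors, corresponding to the two fundamental weights of $\mathrm{SL}_3$), and the identity expressing $\det(I - A_E(\rho,u))$, $\det(I - A_C(\rho,u))$, and $L(\mathrm{Ind}_\G^G\rho,qu)^{-1}$ each as (up to explicit polynomial factors in $u$) a resultant or product built out of $I$, $A_0(u)$, $A_1(u)$ on the vertex space $\C[u]^{\,dN_0}$ -- mirroring the graph case where the Ihara three-term identity $(1-u^2)^{-\chi}\zeta(u)^{-1} = \det(I - Au + Qu^2)$ relates the edge operator to the vertex operator. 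Second, I would set up the Hashimoto-type geometric argument: the closed uni-type geodesics of dimension $1$ and $2$ are in bijection with closed orbits of an edge- resp. chamber-shift operator on the building mod $\G$, twisted by $\rho$ via the $\G$-action on fibers, so that $L_i(X_\G,\rho,u)^{-1} = \det(I - A_E(\rho,u))$ and $\det(I - A_C(\rho,u))$ exactly as in Theorems~\ref{z1},~\ref{z2}. Third -- the heart of the matter -- I would establish the linear-algebra identity
$$
\frac{\det(I - A_C(\rho,-u))}{\det(I - A_E(\rho,u))} = (1-u^3)^{\chi(X_\G)d}\,\det\!\bigl(I - s\,qu \text{ on } \mathrm{Ind}_\G^G\rho\bigr)^{-1}
$$
by reducing both sides to the vertex space and invoking the $\mathrm{SL}_3$ Macdonald/Satake relation that expresses $\det(I - s(\sigma)qu)$ as a specific polynomial in the Hecke operators $A_0, A_1$ acting on unramified vectors -- essentially the rank-two analogue of the Ihara identity, which is precisely what underlies the zeta-function identity of \cite{KL,KLW}. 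For the trivial $\rho$ this is exactly that known identity; for general $\rho$ one lifts it by observing that all three adjacency operators are ``block-scalar'' extensions of their $\rho=1$ counterparts along the flat bundle, so the determinant identity is stable under $\rho \mapsto \rho \otimes (\text{anything})$ in the appropriate sense, or more robustly, one redoes the cohomological bookkeeping with $V_\rho$-coefficients throughout.

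The main obstacle I anticipate is the third step: matching the $u$-power shifts and the exponent of $(1-u^3)$. In the graph case the analogous bookkeeping produces $(1-u^2)^{\chi}$ from the Euler characteristic via the two-term ``backtracking'' correction, and here the three types of edges emanating from a chamber (equivalently, the order-$3$ symmetry coming from the type rotation / the center of $\mathrm{SL}_3$) should be what upgrades $1-u^2$ to $1-u^3$ and $\chi = N_0 - N_1$ to $\chi = N_0 - N_1 + N_2$. Getting the signs right in $L_2(X_\G,\rho,-u)$ -- that is, why the chamber variable must be negated -- will require carefully orienting the chambers and tracking how the boundary map $C_2 \to C_1$ interacts with the shift operator; this is where I would be most careful, and where I expect the bulk of the genuine work (as opposed to formal manipulation) to lie. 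Everything else is either quoted from \S2--\S4 of the paper or is a determinant identity that already appears, for $\rho$ trivial, in \cite{KL,KLW}.
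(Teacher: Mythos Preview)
Your high-level architecture is right and matches the paper: a $u$-deformed cochain complex $C^*$ on pointed simplices with $V_\rho[u]$ coefficients, together with cochain endomorphisms whose alternating product of determinants is computed two ways. The paper does exactly this, constructing endomorphisms $\Phi_i = \Delta_i + (1-u^3)I$ where $\Delta_i = d_{i-1}\delta_i + \delta_{i+1}d_i$ is built from explicit $u$-twisted coboundary maps $d_i$ and auxiliary maps $\delta_i$; since $\Delta_i$ is by construction null-homotopic, $\Phi_i$ is chain-homotopic to multiplication by $1-u^3$, and the alternating product of $\det\Phi_i$ equals $(1-u^3)^{d(N_0-2N_1+3N_2)}$.

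Where your proposal has a genuine gap is your ``third step.'' The work in the paper is not an abstract Euler-characteristic bookkeeping but the explicit identification (Theorem~\ref{thm2}) of each $\det(\Phi_i \mid C^i)$ with, respectively, $L(\Ind_\G^G\rho,qu)^{-1}$, $(1-u^3)^{dN_1}L_1^{-1}$, and $(1-u^3)^{2dN_2}L_2(-u)^{-1}$. This is done by direct operator algebra on $C^i(X_\G,\rho)$: for $i=1$ one introduces auxiliary operators $Q$, $J_E$, $W = I+J_E$, and a nilpotent $N$, and proves $\Phi_1 = (I-J_E)(I - J_EA_EJ_E^{-1} - N)Q^{-1}$; for $i=2$ one shows $\Phi_2 = (I + J_Cu + J_C^2u^2)(I + J_CA_CJ_C^{-1})$ with $J_C$ the cyclic rotation of pointed chambers. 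These factorizations are what produce the $(1-u^3)$-powers and the sign flip $u\mapsto -u$ in $L_2$. Your proposal to instead ``reduce both sides to the vertex space'' via the Satake relation is the strategy of \cite{KLW}, not of the present paper, and you do not indicate how to carry it out with $\rho$-coefficients.

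More seriously, your fallback suggestion---that the adjacency operators are ``block-scalar'' extensions of their $\rho=1$ counterparts, so the determinant identity lifts formally---is incorrect. The block matrix $M_E(\rho,u)$ has $(s,s')$-entry $\rho(\gamma_{ss'})u^{l_A(s)}$ where the $\gamma_{ss'}\in\G$ are genuinely different elements for different pairs $(s,s')$; the blocks are not scalar multiples of a common matrix, and $\det(I-M_E(\rho,u))$ cannot be read off from the $\rho=1$ determinant together with character data of $\rho$. The same holds for $M_C$. So the passage from trivial to general $\rho$ is not formal, and the paper's cohomological machinery with $V_\rho$-coefficients built in from the start is what actually carries the argument.
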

This theorem is proved by a cohomological argument. More precisely, we define a cochain complex $C^*$ 
whose $i$-th cochain group $\dot{C}^i(X_\G, \rho)$ for $0 \le i \le 2$ consists of $V_\rho \otimes_{\mathbb C} \mathbb C[u]$-valued functions on {\it pointed} $i$-simplicies on which the group $\G$ acts via $\rho$, and the coboundary maps are suitable deformations (involving the variable $u$) of the usual coboundary maps. See \S4.2 and \S5.1 for details.
We show in \S6 that there exist cochain endomorphisms $\Phi_{i}= \Phi_i(u)$ on $C^i(X_\G, \rho)$ for $i=0,1$ and 2 whose determinants interpret the $L$-functions introduced above:
\begin{theorem} \label{thm2} $ $
\begin{enumerate}
\item $\det( \Phi_0 ~|~C^0(X_\G, \rho))=L(\Ind_\G^G \rho , qu)^{-1}.$
\item $\det( \Phi_1 ~|~C^1(X_\G, \rho))=(1-u^3)^{dN_1}L_1(X_\G,\rho, u)^{-1}.$
\item $\det( \Phi_2 ~|~C^2(X_\G, \rho))=(1-u^3)^{2dN_2}L_2(X_\G,\rho, -u)^{-1}.$
\end{enumerate}
\end{theorem}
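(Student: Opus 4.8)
The plan is to establish the three identities separately, in each case by writing the endomorphism $\Phi_i$ of \S6 as an explicit matrix over $\C[u]$ --- rows and columns indexed by the pointed $i$-simplices of $X_\G$ together with a fixed basis of $V_\rho$ --- and then computing its determinant by elementary linear algebra over $\C[u]$, matching it against operators whose determinants are already understood.

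Part (1) is the most direct. By Proposition~4.3.1, the reciprocal $L(\Ind_\G^G\rho,qu)^{-1}$ is the determinant of an explicit operator on $C^0(X_\G,\rho)$ --- the space of $V_\rho$-valued functions on the $N_0$ vertices of $X_\G$ --- built from the two $\rho$-twisted vertex adjacency operators $A_1,A_2$ of $X_\G$ (adjacency to a neighbour of relative type $+1$, resp.\ $-1$), namely $I-uA_1+qu^2A_2-q^3u^3I$. Indeed, decomposing $C^0(X_\G,\rho)$ according to $\Ind_\G^G\rho=\bigoplus_\sigma\sigma^{\oplus m(\sigma)}$, the commuting operators $A_1,A_2$ act on the $\sigma$-block through the spherical Hecke algebra with eigenvalues $q\,e_1$ and $q\,e_2$, where $e_j=e_j(s(\sigma))$ is the $j$-th elementary symmetric function of the Satake parameter $s(\sigma)=\diag(\alpha_1,\alpha_2,\alpha_3)\in\SL_3(\C)$ (so $e_3=1$); hence the operator acts on that block as the scalar $1-q e_1u+q^2e_2u^2-q^3u^3=\det(I_3-qu\,s(\sigma))=L(\sigma,qu)^{-1}$, and the product over $\sigma$ is $L(\Ind_\G^G\rho,qu)^{-1}$. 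It then only remains to check that the $\Phi_0$ produced by the construction of \S6 coincides with this operator, which is immediate from the definitions.

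For parts (2) and (3) I would argue by an explicit matrix computation on $C^1(X_\G,\rho)$ and $C^2(X_\G,\rho)$ --- the spaces of $V_\rho$-valued functions on the pointed edges, resp.\ pointed chambers, of $X_\G$. Writing the matrix of $\Phi_1$ in the basis indexed by pointed edges, one finds --- by a sequence of elementary row and column operations over $\C[u]$, or directly --- a factorization $\Phi_1=E_1\,(I-A_E(\rho,u))$, where $A_E(\rho,u)$ is the edge adjacency operator of Theorem~\ref{z1} (it is precisely the ``geodesic'' part of $\Phi_1$, the part recording how a uni-type edge geodesic extends through a pointed edge), and $E_1$ is an explicit ``type twist'' mixing the two pointings of each edge with powers of $u$, whose determinant is readily computed to be $(1-u^3)^{dN_1}$ (one factor $(1-u^3)^{d}$ per edge). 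This yields $\det(\Phi_1 ~|~ C^1)=(1-u^3)^{dN_1}\det(I-A_E(\rho,u))=(1-u^3)^{dN_1}L_1(X_\G,\rho,u)^{-1}$. Part (3) runs the same way with chambers in place of edges: $\Phi_2=E_2\,(I-A_C(\rho,-u))$, where $A_C$ is the chamber adjacency operator of Theorem~\ref{z2} and $E_2$ is the analogous twist on the three cyclic pointings of each chamber --- essentially the circulant with first row $(I,uI,u^2I)$, of determinant $(1-u^3)^{2d}$ per chamber; the substitution $u\mapsto-u$ in the argument of $A_C$ (equivalently of $L_2$) is forced by the alternating signs of the degree-two coboundary map, which weight the three boundary edges of a chamber by $-u$ rather than $u$. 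Hence $\det(\Phi_2 ~|~ C^2)=(1-u^3)^{2dN_2}L_2(X_\G,\rho,-u)^{-1}$. Both $(1-u^3)$ factors reflect the $\Z/3$ type-structure of $\PGL_3$: a uni-type geodesic that turns around at a simplex must traverse a minimal $3$-cycle of the building.

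The hard part will be the bookkeeping underpinning parts (2) and (3): writing $\Phi_1,\Phi_2$ out with the correct powers of $q$ (from the modulus character and the sizes of links in the building of $\PGL_3$) and the correct signs (from the coboundary orientations), and --- the real point --- proving that the ``geodesic'' block of $\Phi_i$ is \emph{exactly} the edge, resp.\ chamber, adjacency operator of \cite{KL,KLW}, not merely an operator with the same determinant. Making precise, in the language of pointed simplices, the split between ``geodesic'' and ``turning-around'' extensions --- equivalently, pinning down the twist operators $E_1,E_2$ --- is the geometric heart of the matter; once that is done, the three determinant evaluations are routine.
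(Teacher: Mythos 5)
Your strategy --- compute $\det\Phi_i$ by writing $\Phi_i$ as an explicit matrix and factoring off a piece whose determinant is $(1-u^3)^{dN_1}$ or $(1-u^3)^{2dN_2}$, with the remainder equal (up to harmless manipulations) to $I-A_E$ or $I+A_C$ --- is exactly the paper's strategy, and your treatment of part (1) coincides with the paper's. The gap is in your claimed factorizations for parts (2) and (3), which are too simple to be true. For (3) you are close: the paper shows $\Phi_2 = (I + J_Cu + J_C^2u^2)(I + J_CA_CJ_C^{-1})$, where $J_C$ is the cyclic shift on the three pointings of a chamber; the second factor is $J_C(I+A_C)J_C^{-1}$, not $I+A_C$ itself, so your $E_2(I-A_C(\rho,-u))$ needs a $J_C$-conjugation on the second factor. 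That is a small fix since conjugation is determinant-preserving.

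Part (2) is where your proposal would genuinely break. The operator $\Phi_1 = (1-u^3)I + \delta_2 d_1 + d_0\delta_1$ does \emph{not} factor as $E_1(I - A_E(\rho,u))$ with $E_1$ a pure ``type twist'' of determinant $(1-u^3)^{dN_1}$. The paper needs three auxiliary devices absent from your sketch: a unipotent ``accumulation'' operator $Q(u)$ on $C^1$ (which sums a pointed edge over intermediate lattices with appropriate powers of $u$) that converts $\delta_1Q$ into a clean weighted sum over all pointed edges through a vertex; the involution-up-to-scalar $J_E(u)$ (exchange the two pointings, with a power of $u$) and $W = I + J_E$, used to both kill $\delta_2 d_1$ on type-$1$ edges and extract a factor $(1-u^3)$ on type-$2$ edges; and a strictly lower-triangular correction $N$ that records the ``turning-around'' extensions your sketch waves at but does not pin down. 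What one actually proves is $\frac{1}{1-u^3}W\Phi_1 Q = I - J_EA_EJ_E^{-1} - N$, and only after noting $\det Q = 1$, $\det(I-J_E) = (1-u^3)^{dN_1}$, and that the nilpotent $N$ and the $J_E$-conjugation leave $\det(I-A_E)$ unchanged does the identity $\det\Phi_1 = (1-u^3)^{dN_1}\det(I-A_E)$ emerge. In short, you have correctly guessed which quantities must match, but the direct factorization you propose does not exist; the proof requires identifying $Q$, $J_E$, $W$, and $N$ and establishing the identity above --- precisely the bookkeeping you flag as the ``hard part,'' and it is harder than your sketch anticipates.
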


\noindent The desired identity in Theorem \ref{thm1} then follows from the fact that for each $i$, the cochain map $\Phi_i$ on $C^i(X_\G, \rho)$ is homotopically equivalent to the cochain map multiplication by $1-u^3$.

\begin{remark} The combinatorial Artin $L$-functions attached to representations were considered by Ihara in \cite{Ih}, Hashimoto in \cite{Ha2, Ha3}, Mizuno and Sato \cite{MS}, and Stark and Terras in \cite{ST2} for graphs. In the case of a finite connected undirected graph $Y$, the equivalence classes of primitive tailless closed geodesics in $Y$, that is, the \lq\lq primes\rq\rq for $Y$, naturally correspond to the conjugacy classes of nonidentity primitive elements in the fundamental group of $Y$. As pointed out in \cite{KL}, this is no longer the case for our $X_\G$. Namely there are more Frobenius conjugacy classes than conjugacy classes of nonidentity primitive elements in $\G$. Our results are generalizations of those of Hashimoto \cite{Ha2} from graphs to two-dimensional complexes, but our method is different from his. For trivial $\rho$, two different proofs are in the literature: the one in \cite{KL} results from counting the number of desired closed geodesics of given length, while in \cite{KLW} the identity is derived using representation theory. The cohomological method described above is a generalization of the approach by Bass \cite{Ba} and re-interpreted by Hoffman \cite{Hof} for graphs. In particular it provides a third proof of the identities on zeta functions established in \cite{KL, KLW}.
\end{remark}

Set
$$ \epsilon( \rho, u) = \left(1-\left(\frac{u}{q}\right)^3\right)^{dN_0/2}\bigg(1-(qu)^3\bigg)^{dN_0/2}.$$
In \S2.2 we show that there is a functional equation relating the $L$-function of $\Ind_\G^G \rho$ and that of $\Ind_\G^G \rho^*$.

\begin{theorem} \label{thm-functionalequation}
The following functional equation holds:
$$ \epsilon(\rho,\frac{1}{q u}) L(\Ind_\G^G \rho,\frac{1}{q u})= \epsilon(\rho^*, q u ) L(\Ind_\G^G \rho^*,qu).$$
Here $\rho^*$ is the contragredient representation of $\rho$.
\end{theorem}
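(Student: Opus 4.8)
The plan is to derive the functional equation for the $L$-function of $\Ind_\G^G \rho$ directly from the explicit description of $L(\Ind_\G^G\rho,u)$ as a product over unramified irreducible constituents, using the fact (established in \S2) that its reciprocal is a polynomial of degree $3dN_0$. The starting point is the formula $L(\sigma,u)^{-1}=\det(I-s(\sigma)u)$ where $s(\sigma)\in\SL_3(\C)$. Because $s(\sigma)$ lies in $\SL_3$, its characteristic polynomial $p_\sigma(u)=\det(I-s(\sigma)u)$ has constant term $1$, leading coefficient $(-1)^3\det(s(\sigma))=-1$, and more importantly the three eigenvalues $\alpha_1,\alpha_2,\alpha_3$ satisfy $\alpha_1\alpha_2\alpha_3=1$; hence the reciprocal polynomial satisfies $u^3 p_\sigma(1/u)=-p_\sigma(u)$, or equivalently $\det(I-s(\sigma)^{-1}u)=\det(I-s(\sigma)u)$ after reindexing eigenvalues. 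The key representation-theoretic input I would use is that the Satake parameter of the contragredient is the inverse (transpose-inverse, hence $\hat G$-conjugate to inverse) of the original Satake parameter, so that the standard $L$-factor of $\sigma$ in $\Ind_\G^G\rho$ matches the standard $L$-factor of $\tilde\sigma$ in $\Ind_\G^G\rho^*$; this pairs up the constituents of $\Ind_\G^G\rho$ with those of $\Ind_\G^G\rho^*$ with equal multiplicities.

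First I would record the precise shape: writing $L(\Ind_\G^G\rho,u)^{-1}=\prod_\sigma \det(I-s(\sigma)u)^{m(\sigma)}$, a polynomial of degree $3dN_0$ in $u$ with constant term $1$. Next, I would compute its behavior under $u\mapsto 1/(q u)$. Substituting and clearing denominators by multiplying through by $(qu)^{3dN_0}$, one obtains $(qu)^{3dN_0}L(\Ind_\G^G\rho,1/(qu))^{-1}=\prod_\sigma\big((qu)^3\det(I-s(\sigma)/(qu))\big)^{m(\sigma)}$. For each factor, $(qu)^3\det(I-s(\sigma)/(qu))=\det((qu)I-s(\sigma))=(qu)^3 + \cdots$, and using $\det s(\sigma)=1$ this equals, up to sign and reindexing, $(qu)^3\det(I-s(\sigma)^{-1}/(qu))$ compared against $\det(I-s(\tilde\sigma)(qu))$ evaluated suitably — the clean way is to note $\det(wI - s(\sigma))=-w^3\det(I - s(\sigma)^{-1}w)$ since the product of eigenvalues is $1$, so $\det((qu)I-s(\sigma))=-(qu)^3\det(I-s(\sigma)^{-1}(qu))\cdot(qu)^{-3}$... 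I will instead organize it as: $\det(I-s(\sigma)u)=\det(I-s(\sigma)^{-1}u)$ because $s(\sigma)$ and $s(\sigma)^{-1}$ have the same characteristic polynomial (eigenvalues closed under $z\mapsto 1/z$ when the product is $1$ and they come in the reciprocal pattern $\{\alpha,\beta,1/(\alpha\beta)\}$). Then $L(\sigma,u)=L(\tilde\sigma,u)$ as well, since $s(\tilde\sigma)=s(\sigma)^{-1}$ up to conjugacy. So actually the self-duality shows $L(\Ind_\G^G\rho,u)=L(\Ind_\G^G\rho^*,u)$ factor by factor once multiplicities are matched, and the functional equation reduces to a statement purely about the $\epsilon$-factor and the substitution $u\mapsto 1/(qu)$.

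Granting that reduction, I would then verify the $\epsilon$-factor bookkeeping: with $\epsilon(\rho,u)=(1-(u/q)^3)^{dN_0/2}(1-(qu)^3)^{dN_0/2}$, a direct substitution shows $\epsilon(\rho,1/(qu))=(1-1/(q^2u)^3\cdot q^3\cdot\!)\ldots$ — more carefully, $\epsilon(\rho,1/(qu))=(1-1/(q^6u^3))^{dN_0/2}(1-1/u^3)^{dN_0/2}$ while $\epsilon(\rho^*,qu)=(1-u^3)^{dN_0/2}(1-(q^2u)^3)^{dN_0/2}$; one checks these differ exactly by the factor $(qu)^{3dN_0}$ up to a sign $(\pm1)^{dN_0}$, which is precisely the factor needed to convert $L(\Ind_\G^G\rho,1/(qu))$ into $L(\Ind_\G^G\rho^*,qu)$ given the degree-$3dN_0$ reciprocity of the polynomial $L^{-1}$. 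Finally I would combine the $L$-function identity $L(\Ind_\G^G\rho,u)=L(\Ind_\G^G\rho^*,u)$ (or more honestly the reciprocal-polynomial identity $u^{3dN_0}L(\Ind_\G^G\rho,1/u)^{-1}=(-1)^{dN_0}L(\Ind_\G^G\rho^*,u)^{-1}$ coming from eigenvalue reciprocity in $\SL_3$) with the $\epsilon$-factor computation to conclude.

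The main obstacle I anticipate is pinning down the exact relationship between the Satake parameter $s(\sigma)$ and that of its contragredient, together with the correct matching of multiplicities $m(\sigma)$ in $\Ind_\G^G\rho$ versus $m(\tilde\sigma)$ in $\Ind_\G^G\rho^*$: one needs that $\sigma\mapsto\tilde\sigma$ permutes the unramified irreducible constituents and that $m(\tilde\sigma)$ (in $\Ind_\G^G\rho^*$) equals $m(\sigma)$ (in $\Ind_\G^G\rho$), which should follow from $\Ind_\G^G\rho^*\cong(\Ind_\G^G\rho)^*$ (valid since $\G$ is cocompact, so induction and contragredient commute up to the discrete decomposition) together with the fact that the contragredient of an unramified irreducible is unramified irreducible with inverse Satake parameter. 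Once this structural fact is in hand, everything else is the bookkeeping of $\SL_3$-characteristic-polynomial reciprocity and a one-line $\epsilon$-factor substitution, so I would be careful to state the Satake-contragredient compatibility as the key lemma and keep the rest terse.
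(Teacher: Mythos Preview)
Your proposal contains a genuine error in its main line. The claim that $\det(I-s(\sigma)u)=\det(I-s(\sigma)^{-1}u)$ because ``eigenvalues are closed under $z\mapsto 1/z$'' is false: for a generic element of $\SL_3(\C)$ with eigenvalues $\{\alpha,\beta,(\alpha\beta)^{-1}\}$, the inverse has eigenvalues $\{\alpha^{-1},\beta^{-1},\alpha\beta\}$, and these two sets are not equal. Consequently $L(\sigma,u)\ne L(\tilde\sigma,u)$ in general, and the assertion that ``$L(\Ind_\G^G\rho,u)=L(\Ind_\G^G\rho^*,u)$ factor by factor'' is wrong; the functional equation does \emph{not} reduce to a statement about $\epsilon$-factors alone. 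What \emph{is} true, and what you bury in the parenthetical at the end, is the reciprocal-polynomial relation: since $\det s(\sigma)=1$, one has $u^3\det(I-s(\sigma)u^{-1})=-\det(I-s(\sigma)^{-1}u)$, which after taking the product over constituents (and matching multiplicities via $(\Ind_\G^G\rho)^*\cong\Ind_\G^G\rho^*$) yields $L(\Ind_\G^G\rho,\tfrac{1}{qu})=(-q^3u^3)^{dN_0}L(\Ind_\G^G\rho^*,qu)$. That is the identity you actually need, and the $\epsilon$-factor then absorbs the monomial $(-q^3u^3)^{dN_0}$.

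The paper's own argument is both different and cleaner. Rather than decomposing into irreducible constituents and tracking Satake parameters, it works directly with the Hecke operators $A_1,A_2$ on the single finite-dimensional space $(\Ind_\G^G\rho)^K$ via Proposition~\ref{Lfunction1}. The key observation is that passing from $\sigma$ to $\sigma^*$ interchanges the roles of $A_1$ and $A_2$ (because $\lambda_1+\lambda_2+\lambda_3=\lambda_1^{-1}\lambda_2^{-1}+\lambda_1^{-1}\lambda_3^{-1}+\lambda_2^{-1}\lambda_3^{-1}$ when $\lambda_1\lambda_2\lambda_3=1$), so that $L(\Ind_\G^G\rho^*,qu)^{-1}=\det(I-A_2u+qA_1u^2-q^3u^3I)$ on the \emph{same} space. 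The substitution $u\mapsto 1/(q^2u)$ in $\det(I-A_1u+qA_2u^2-q^3u^3I)$ and pulling out the scalar $(-q^3u^3)^{-1}$ then gives the relation immediately, with no appeal to the decomposition into irreducibles and hence no need for the multiplicity-matching lemma you flag as the main obstacle.
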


Combined with Theorem \ref{thm1}, the above functional equation can be restated in terms of the quotient of the Artin $L$-functions.
\begin{theorem} \label{thm-functionalequation}
The following functional equation holds:
$$
\tilde{\epsilon}(\rho,\frac{1}{q^2u})
\frac{L_1(X_\G, \rho, \frac{1}{q^2 u})}{L_2(X_\G, \rho, -\frac{1}{q^2 u})}
= \tilde{\epsilon}(\rho^*,u) \frac{L_1(X_\G,\rho^*, u)}{L_2(X_\G, \rho^*, -u)}.$$
Here $ \tilde{\epsilon}(\rho,u) = \epsilon( \rho, q u) (1-u^3)^{-\chi(X_\Gamma)d}.$
\end{theorem}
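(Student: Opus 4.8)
The plan is to obtain this identity as a formal consequence of Theorem \ref{thm1} together with the functional equation for $L(\Ind_\G^G\rho,u)$ stated just above it, via a single change of variable; no new geometry or cohomology enters.

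First I would rewrite Theorem \ref{thm1} in the solved form
$$L(\Ind_\G^G\rho, qu) = (1-u^3)^{-\chi(X_\G)d}\,\frac{L_1(X_\G,\rho,u)}{L_2(X_\G,\rho,-u)},$$
an identity of rational functions in $u$. Applying it with $\rho^*$ in place of $\rho$ and argument $u$ produces exactly the ratio on the right-hand side of the target equation (up to the $\epsilon$-factor), while applying it with $\rho$ and argument $\frac{1}{q^2u}$, chosen so that $q\cdot\frac{1}{q^2u}=\frac{1}{qu}$, gives
$$L\!\left(\Ind_\G^G\rho,\tfrac{1}{qu}\right) = \left(1-\left(\tfrac{1}{q^2u}\right)^{3}\right)^{-\chi(X_\G)d}\frac{L_1\!\left(X_\G,\rho,\tfrac{1}{q^2u}\right)}{L_2\!\left(X_\G,\rho,-\tfrac{1}{q^2u}\right)}.$$

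Next I would substitute both expressions into the functional equation
$$\epsilon\!\left(\rho,\tfrac{1}{qu}\right)L\!\left(\Ind_\G^G\rho,\tfrac{1}{qu}\right) = \epsilon(\rho^*,qu)\,L(\Ind_\G^G\rho^*,qu).$$
On the left this yields the coefficient $\epsilon(\rho,\tfrac{1}{qu})\bigl(1-(\tfrac{1}{q^2u})^3\bigr)^{-\chi(X_\G)d}$ multiplying $L_1(X_\G,\rho,\tfrac{1}{q^2u})/L_2(X_\G,\rho,-\tfrac{1}{q^2u})$, and on the right the coefficient $\epsilon(\rho^*,qu)(1-u^3)^{-\chi(X_\G)d}$ multiplying $L_1(X_\G,\rho^*,u)/L_2(X_\G,\rho^*,-u)$. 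Comparing with the definition $\tilde\epsilon(\rho,u)=\epsilon(\rho,qu)(1-u^3)^{-\chi(X_\G)d}$, and using $\epsilon(\rho,q\cdot\tfrac{1}{q^2u})=\epsilon(\rho,\tfrac{1}{qu})$, the left coefficient is precisely $\tilde\epsilon(\rho,\tfrac{1}{q^2u})$ and the right coefficient is precisely $\tilde\epsilon(\rho^*,u)$, which is the asserted identity.

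Since every object in sight is a rational function and the substitution $u\mapsto\frac{1}{q^2u}$ is invertible, there is no analytic issue, and the argument reduces to this substitution. The only point demanding care — and the nearest thing to an obstacle — is the bookkeeping: one must check that the power of $(1-v^3)$, with $v=u$ on one side and $v=\frac{1}{q^2u}$ on the other, contributed by Theorem \ref{thm1} merges cleanly with the $\epsilon$-factor from the functional equation into a single symbol $\tilde\epsilon$ with no leftover factor. This is forced by the very definition of $\tilde\epsilon$ and the identity $q\cdot\frac{1}{q^2u}=\frac{1}{qu}$, so the matching is automatic once the change of variable is set up correctly.
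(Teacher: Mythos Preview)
Your proposal is correct and is exactly the approach the paper takes: the paper states that this theorem is obtained by combining Theorem~\ref{thm1} with the functional equation for $L(\Ind_\G^G\rho,u)$, and your substitution $u\mapsto \frac{1}{q^2u}$ (so that $qu\mapsto\frac{1}{qu}$) together with the definition of $\tilde\epsilon$ carries out precisely that restatement.
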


 It follows immediately from the definition that for $i = 1, 2$ the Artin $L$-function decomposes into a product when the representation is a direct sum, that is,
 $$L_i(X_\G, \rho_1\oplus \rho_2, u) = L_i(X_\G, \rho_1, u)L_i(X_\G, \rho_2, u).$$ In \S7 we show that the Artin $L$-function is invariant under induction, just like the
 usual Artin $L$-functions attached to representations of the absolute Galois group of a number field.

\begin{theorem}\label{induction} 
Suppose $\rho$ is induced from a finite-dimensional representation $\rho'$ of a finite-index subgroup $\G'$ of $\G$. Let $X_{\G'} = \G' \backslash X$. Then for $i = 1, 2$ we have
$$ L_i(X_\G, \rho, u) = L_i(X_{\G'}, \rho', u).$$
\end{theorem}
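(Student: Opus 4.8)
The plan is to prove Theorem~\ref{induction} directly from the Euler-product definition of the Artin $L$-functions, exhibiting a bijection between the $i$-dimensional primes of $X_{\G'}$ (with their Frobenius data valued in $\rho'$) and the $i$-dimensional primes of $X_\G$ (with their Frobenius data valued in $\rho = \Ind_{\G'}^\G \rho'$). Since $\G'$ has finite index in $\G$, the covering $X_{\G'} = \G'\backslash X \to \G\backslash X = X_\G$ is a finite covering of complexes, and every closed uni-type $i$-dimensional geodesic $\mathfrak{p}$ in $X_\G$ lifts; its lifts in $X_{\G'}$ are again primitive closed uni-type geodesics, and the combinatorial/algebraic lengths behave in the expected way. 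The key bookkeeping fact is the standard decomposition of a prime under a finite covering: fixing a base prime $[\mathfrak{p}]$ of $X_\G$ with $Frob_{[\mathfrak{p}]}$ represented by $\g \in \G$, the primes of $X_{\G'}$ lying over $[\mathfrak{p}]$ are indexed by the orbits of $\langle \g\rangle$ acting on the coset space $\G'\backslash\G$, and a prime corresponding to an orbit of size $f$ has algebraic length $f\cdot l_A(\mathfrak{p})$ with Frobenius (in $\G'$) conjugate to $\g^f$ conjugated into $\G'$ appropriately.

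With that in hand, the core of the argument is the purely algebraic identity
$$\prod_{[\mathfrak{q}]\mapsto [\mathfrak{p}]} \det\!\left(I - \rho'(Frob_{[\mathfrak{q}]})\, u^{l_A(\mathfrak{q})}\right) = \det\!\left(I - \rho(Frob_{[\mathfrak{p}]})\, u^{l_A(\mathfrak{p})}\right),$$
which is exactly the local-factor form of the classical statement that Artin $L$-functions are inductive. First I would reduce to a single base prime: write $\g = Frob_{[\mathfrak{p}]}$, let $n = l_A(\mathfrak{p})$, and set $X = \rho(\g^n)$ acting on $V_\rho = \Ind_{\G'}^\G V_{\rho'}$. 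Using a set of coset representatives for $\G'\backslash\G$ and the explicit formula for the induced representation, one checks that the characteristic-polynomial identity $\det(I - X u^{\,?})$ decomposes over the cycles of $\g$ on $\G'\backslash\G$ exactly into the local factors on the left-hand side, the cycle of length $f$ contributing $\det(I - \rho'(\delta\g^{fn}\delta^{-1})u^{fn})$ for the appropriate $\delta$. This is the same linear-algebra computation that appears in the proof of inductivity for number-field Artin $L$-functions (``Mackey/Frobenius reciprocity at the level of determinants''); I would cite it as such and verify the translation of notation. Taking the product over all base primes $[\mathfrak{p}]$ and inverting gives $L_i(X_{\G'},\rho',u) = L_i(X_\G,\rho,u)$.

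The main obstacle I anticipate is not the linear algebra but the geometry of ``uni-type $i$-dimensional geodesics'': I must confirm that the notion of prime used in defining $L_i$ is compatible with finite coverings in the strong sense needed above. Concretely, I need: (i) every primitive uni-type closed $i$-geodesic in $X_\G$ pulls back to a disjoint union of primitive uni-type closed $i$-geodesics in $X_{\G'}$ with no collapsing of type or length beyond the expected $f$-fold wrapping; (ii) the map on equivalence classes is surjective with fibers exactly the $\langle\g\rangle$-orbits on $\G'\backslash\G$; and (iii) the Frobenius conjugacy class $Frob_{[\mathfrak{q}]}$ in $\G'$ is the one predicted by the covering theory, i.e.\ it is the return element of the lifted geodesic and equals a $\G'$-conjugate of $\g^f$ (transported by the coset representative). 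Points (ii)--(iii) are formal once the geodesics are described group-theoretically via their starting chamber/edge and return element, as in \S3.3 and \S4.4; point (i) requires checking that the local-isomorphism structure and the ``uni-type'' condition (which is a condition on the local colors/types along the geodesic) are preserved under the covering $X_{\G'}\to X_\G$, which they are because the covering is a local isomorphism of typed complexes. Once these compatibilities are recorded, the proof is the determinant identity above applied factor by factor; the decomposition of the $L$-function under direct sums, already noted before the theorem, is a special case and serves as a consistency check.
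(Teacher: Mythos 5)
Your proposal is mathematically sound in outline, but it takes a genuinely different route from the paper. You argue directly from the Euler product: split each prime $[\mathfrak{p}]$ of $X_\G$ over the finite cover $X_{\G'}\to X_\G$, show the primes of $X_{\G'}$ over $[\mathfrak{p}]$ correspond to $\langle\g_\mathfrak{p}\rangle$-orbits on $\G'\backslash\G$, and then invoke the classical determinant identity
$$\det\!\bigl(I-\Ind_{\G'}^\G\rho'(\g)\,t\bigr)=\prod_{\text{orbits of size }f}\det\!\bigl(I-\rho'(\delta\g^{f}\delta^{-1})\,t^{f}\bigr),$$
applied with $t=u^{l_A(\mathfrak{p})}$. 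This is exactly Hashimoto's strategy for graphs (which the paper cites as [Ha3] and explicitly declines to follow). The paper instead bypasses primes altogether: it constructs an explicit $\C[u]$-module isomorphism $\phi_i:\dot{C}^i(X_{\G'},\rho')\to\dot{C}^i(X_{\G},\rho)$ (sending $g$ to $f_g$ with $f_g(x_i)(\g)=g(\g x_i)$), verifies that $\phi_i$ intertwines the adjacency operators $A_E(\rho',u)$ and $A_E(\rho,u)$ (resp. $A_C$), and then reads off $L_i(X_{\G'},\rho',u)=\det(I-A_E(\rho',u))^{-1}=\det(I-A_E(\rho,u))^{-1}=L_i(X_\G,\rho,u)$ from Theorems \ref{z1} and \ref{z2}. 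The paper's route is a short, purely linear-algebraic verification that avoids the covering-theoretic bookkeeping entirely; your route is closer to the number-theoretic prototype and makes the ``splitting of primes'' picture explicit.

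The one caveat in your version is that the covering-theoretic compatibilities (i)--(iii) are stated as expectations and do need to be nailed down, particularly primitivity of the lifted geodesics and the claim that the fibre over $[\mathfrak{p}]$ is exactly the set of $\langle\g_\mathfrak{p}\rangle$-orbits on $\G'\backslash\G$. These hold by the same arguments as for graphs, because the covering $X_{\G'}\to X_\G$ is a type-preserving local isomorphism and uni-type geodesics in $X_{\G'}$ are precisely the lifts of uni-type geodesics in $X_\G$; but you should also be aware of the paper's warning (see the Remark after Theorem \ref{thm2}) that the map from primes to conjugacy classes in $\G$ is not injective in the two-dimensional case, so one must argue at the level of primes, not of conjugacy classes. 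Your proposal does phrase the splitting at the level of primes over a fixed base prime, which is the right level of generality, so this is a detail to record rather than a fatal flaw. It is, however, precisely the bookkeeping that the paper's adjacency-operator argument is designed to sidestep, which is why the published proof takes the other route.
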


The corresponding statement for graphs was proved by Hashimoto in \cite{Ha3}, where it was proved by counting closed geodesics, using definition of the Artin $L$-function. Our proof in \S7 compares the actions of the edge/chamber adjacency operators on $X_\G$ and $X_{\G'}$,  using Theorems \ref{z1} and \ref{z2}.

In particular, when $\rho'$ is the identity representation of a finite-index normal subgroup $\G'$, the induced representation $\Ind_{\G'}^{\G} 1$ decomposes into the direct sum $\oplus_{\sigma \in \widehat{\G/\G'}} m(\sigma) \sigma$, where $\widehat{\G/\G'}$ consists of all irreducible representations of the quotient group $\G/\G'$, and the multiplicity $m(\sigma)$ is equal to the degree of $\sigma$. Thus
$$ Z_i(X_{\G'}, u) = L_i(X_{\G'}, 1, u) = \prod_{\sigma \in \widehat{\G/\G'}} L_i(X_{\G}, \sigma, u)^{m(\sigma)}$$
for $i = 1, 2$. By Theorems \ref{z1} and \ref{z2}, the reciprocal of each $L_i$ above is a polynomial, hence we conclude

\begin{corollary} 
Let $\G'$ be a normal subgroup of $\G$ of finite index. Then for $i = 1$ and $2$,
$Z_i(X_\G, u)^{-1}$ divides $Z_i(X_{\G'}, u)^{-1}$.
\end{corollary}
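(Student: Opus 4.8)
The plan is to deduce the corollary directly from the product formula
$$ Z_i(X_{\G'}, u)^{-1} = \prod_{\sigma \in \widehat{\G/\G'}} L_i(X_{\G}, \sigma, u)^{-m(\sigma)} $$
established just above it, together with the polynomiality statements in Theorems \ref{z1} and \ref{z2}. First I would isolate the factor in this product corresponding to the trivial representation $\sigma_0$ of $\G/\G'$, which occurs with multiplicity $m(\sigma_0) = 1$ since $\G/\G'$ is finite (the trivial representation is one-dimensional). For this factor one has $L_i(X_\G, \sigma_0, u) = L_i(X_\G, 1, u) = Z_i(X_\G, u)$, because the representation $1$ of $\G$ inflated from the trivial representation of $\G/\G'$ is literally the trivial representation of $\G$. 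Hence
$$ Z_i(X_{\G'}, u)^{-1} = Z_i(X_\G, u)^{-1} \cdot \prod_{\sigma \in \widehat{\G/\G'},\ \sigma \neq \sigma_0} L_i(X_{\G}, \sigma, u)^{-m(\sigma)}, $$
so it remains to check that the second factor is a polynomial in $u$.

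For that, I would invoke Theorems \ref{z1} and \ref{z2}: for each irreducible $\sigma$ of dimension $d_\sigma = \dim \sigma$, the reciprocal $L_i(X_\G, \sigma, u)^{-1}$ is a polynomial in $u$ (of degree $2 d_\sigma N_1$ when $i=1$ and $3 d_\sigma N_2$ when $i=2$). Since each multiplicity $m(\sigma) = d_\sigma$ is a nonnegative integer, every power $L_i(X_{\G}, \sigma, u)^{-m(\sigma)}$ appearing in the product over $\sigma \neq \sigma_0$ is itself a polynomial, and therefore so is their product. This shows that $Z_i(X_{\G'}, u)^{-1}$ is the product of the polynomial $Z_i(X_\G, u)^{-1}$ with another polynomial, which is precisely the divisibility assertion $Z_i(X_\G, u)^{-1} \mid Z_i(X_{\G'}, u)^{-1}$ in the ring $\C[u]$.

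There is essentially no obstacle here: the content has been entirely front-loaded into Theorem \ref{induction} (invariance under induction), the elementary decomposition $\Ind_{\G'}^\G 1 = \oplus_\sigma d_\sigma\, \sigma$ for finite $\G/\G'$, and the polynomiality of the reciprocal $L$-functions. The only points that merit a sentence are (i) that $Z_i(X_\G, u) = L_i(X_\G, 1, u)$ truly is the $\sigma_0$-factor with multiplicity exactly one, and (ii) that divisibility is meant in $\C[u]$, where the quotient is the remaining product of polynomials displayed above. If one wishes to be slightly more careful, one should note that $Z_i(X_\G, u)^{-1}$ is not the zero polynomial (indeed its constant term is $1$, as it is a determinant $\det(I - A(\rho,0)) = \det I = 1$), so the divisibility statement is non-vacuous; this too follows at once from Theorems \ref{z1} and \ref{z2}.
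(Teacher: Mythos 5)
Your proposal is correct and follows essentially the same route as the paper: both deduce the corollary from the product decomposition $Z_i(X_{\G'}, u)^{-1} = \prod_{\sigma \in \widehat{\G/\G'}} L_i(X_{\G}, \sigma, u)^{-m(\sigma)}$ combined with the polynomiality of each $L_i(X_\G, \sigma, u)^{-1}$ from Theorems \ref{z1} and \ref{z2}, with the factor for the trivial representation (multiplicity one) being exactly $Z_i(X_\G, u)^{-1}$. You merely make explicit the isolation of the trivial factor and the non-vanishing constant term, details the paper leaves tacit.
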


The corresponding statement for graphs was proved in \cite{Ha2}.

\section{$L$-functions and functional equations} \label{lfunction}
\subsection{$L$-functions}
The group $K=\PGL_3(\oo)$ is the standard maximal compact subgroup of $G=\PGL_3(F)$.
The Hecke algebra $H(G,K )$ is generated by the following two Hecke operators:
$$ A_1 = K \left(\begin{smallmatrix} 1 & &\\ &1& \\ & & \pi \end{smallmatrix}\right)K \qquad \mbox{and} \qquad
 A_2 = K \left(\begin{smallmatrix} 1 & &\\ & \pi & \\ & & \pi \end{smallmatrix}\right)K.$$
The Satake isomorphism (\cite{Sat}) $\phi:H(G,K) \to \mathbb C[z_1,z_2,z_3]^{S_3}/(z_1z_2z_3-1)$ is characterized by
$$ \phi(A_1)=q(z_1+z_2+z_3) \qquad \mbox{and} \qquad \phi(A_2)=q(z_1 z_2 + z_2 z_3 + z_1 z_3).$$
For an unramified irreducible representation $(\sigma,V_\sigma)$ of $G$ with the Satake parameter $s(\sigma)$ equal to the conjugacy class of $\left(\begin{smallmatrix} \lambda_1 & &\\ & \lambda_2 & \\ & & \lambda_2 \end{smallmatrix}\right)$,
its $K$-fixed subspace $V_\sigma^K$ is one-dimensional on which $I-A_1 u + q A_2 u^2 - q^3 u^3 I$ acts as multiplication by the scalar
\begin{eqnarray*}
\det(I-A_1 u + q A_2 u^2 - q^3 u^3 I ~|~ V_\sigma^{K}) &=&
1-q(\lambda_1+\lambda_2+\lambda_3) u + q^2(\lambda_1 \lambda_2 + \lambda_2 \lambda_3 + \lambda_1 \lambda_3) u^2 - q^3 u^3\\
&=& \prod_{i=1}^3 (1-q \lambda_i u)=L(\sigma, qu)^{-1}.
\end{eqnarray*}
Here and thereafter $\det(A | W)$ denotes the determinant of the linear operator $A$ on the finite dimensional vector space $W$.
The induced representation
$$\Ind_\G^G \rho = \{ f:  G \to V_\rho 　\, | \, f(\g x) = \rho(\g) f(x), \, ~\text{for~all}~ \g \in \G ~\text{and} ~x \in G\}$$
decomposes into a direct sum of irreducible  representations $\sigma$ of $G$.
The total number of unramified $\sigma$'s, counting multiplicity, is equal to the dimension of $(\Ind_\G^G \rho)^K$. Hence
\begin{align*}
L(\Ind_\G^G \rho , q u) &=  \prod_{\sigma} L(\sigma, q u) = \prod_{\sigma} \det(I -A_1 u + q A_2 u^2 - q^3 u^3 I ~|~ V_\sigma^{K})^{-1}\\
& =
\det(I-A_1 u + q A_2 u^2 - q^3 u^3 I ~|~ (\Ind_\G^G \rho)^K)^{-1}.
\end{align*}
We record this in
\begin{proposition} \label{Lfunction1}
$$ L(\Ind_\G^G \rho , q u)=
\det(I-A_1 u + q A_2 u^2 - q^3 u^3 I ~|~ (\Ind_\G^G \rho)^K)^{-1}.$$
\end{proposition}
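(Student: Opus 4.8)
The plan is to compute the scalar by which the operator $I - A_1 u + qA_2 u^2 - q^3 u^3 I$ acts on the one-dimensional $K$-fixed line $V_\sigma^K$ for each unramified irreducible constituent $\sigma$ of $\Ind_\G^G\rho$, and then to package these scalars into a single determinant over the full space $(\Ind_\G^G\rho)^K$. First I would invoke the Satake isomorphism $\phi\colon H(G,K)\to\C[z_1,z_2,z_3]^{S_3}/(z_1z_2z_3-1)$ with $\phi(A_1)=q(z_1+z_2+z_3)$ and $\phi(A_2)=q(z_1z_2+z_2z_3+z_1z_3)$. Since $\sigma$ is unramified, $V_\sigma^K$ is one-dimensional, and $H(G,K)$ acts on it through the character corresponding to the Satake parameter $s(\sigma)=\diag(\lambda_1,\lambda_2,\lambda_3)$ with $\lambda_1\lambda_2\lambda_3=1$; thus $A_1$ acts as $q(\lambda_1+\lambda_2+\lambda_3)$ and $A_2$ as $q(\lambda_1\lambda_2+\lambda_2\lambda_3+\lambda_1\lambda_3)$ on $V_\sigma^K$. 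Plugging these in and using $\lambda_1\lambda_2\lambda_3=1$, the operator $I - A_1 u + qA_2 u^2 - q^3 u^3 I$ acts on $V_\sigma^K$ as the scalar
$$ 1 - q(\lambda_1+\lambda_2+\lambda_3)u + q^2(\lambda_1\lambda_2+\lambda_2\lambda_3+\lambda_1\lambda_3)u^2 - q^3 u^3 = \prod_{i=1}^3(1-q\lambda_i u) = L(\sigma,qu)^{-1}, $$
the last two equalities being the elementary-symmetric-function expansion of $\prod_i(1-q\lambda_i u)$ together with the definition $L(\sigma,u)^{-1}=\det(I-s(\sigma)u)=\prod_i(1-\lambda_i u)$.

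Next I would assemble these local factors. Since $\G$ is discrete and cocompact in $G$, the representation $\Ind_\G^G\rho$ decomposes as a direct sum $\bigoplus_\sigma m(\sigma)\sigma$ of irreducible representations of $G$. Taking $K$-fixed vectors commutes with direct sums, so $(\Ind_\G^G\rho)^K=\bigoplus_\sigma m(\sigma) V_\sigma^K$, and this decomposition is preserved by the Hecke operators $A_1,A_2$, hence by $I-A_1u+qA_2u^2-q^3u^3 I$. Therefore the determinant of this operator on $(\Ind_\G^G\rho)^K$ is the product of the scalars above, each raised to the power $m(\sigma)$:
$$ \det(I-A_1u+qA_2u^2-q^3u^3 I \mid (\Ind_\G^G\rho)^K) = \prod_\sigma L(\sigma,qu)^{-m(\sigma)} = L(\Ind_\G^G\rho,qu)^{-1}, $$
where the last equality is exactly the definition of $L(\Ind_\G^G\rho,u)=\prod_\sigma L(\sigma,u)^{m(\sigma)}$. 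Inverting both sides gives the claimed formula.

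The only genuine subtlety—rather than a routine calculation—is the justification that unramified constituents are the only ones that matter: non-unramified irreducible constituents $\sigma$ contribute $V_\sigma^K=0$ and so are invisible to the determinant on $(\Ind_\G^G\rho)^K$, which is why the product over all unramified $\sigma$ equals the determinant over the full $K$-fixed space. This is where one must be careful about what "the total number of unramified $\sigma$'s, counting multiplicity, is equal to $\dim(\Ind_\G^G\rho)^K$" really asserts, but it follows immediately once one knows that an irreducible admissible representation of $G$ has a nonzero $K$-fixed vector if and only if it is unramified, and in that case the $K$-fixed space is one-dimensional. With that input the argument is a direct bookkeeping of the Hecke action, and no further obstacle arises.
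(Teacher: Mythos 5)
Your proposal is correct and follows essentially the same route as the paper: compute the Hecke action on each one-dimensional $V_\sigma^K$ via the Satake isomorphism, identify the resulting scalar with $L(\sigma,qu)^{-1}$, and then multiply over the unramified constituents of $\Ind_\G^G\rho$ to assemble the full determinant on $(\Ind_\G^G\rho)^K$. The one small thing you make explicit that the paper leaves implicit is that non-unramified constituents are invisible because they have no $K$-fixed vectors; that is a reasonable clarification, not a deviation.
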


Note that the dimension of $(\Ind_\G^G \rho)^K$ is equal to the cardinality of the double cosets in $\G \backslash G /K$ times the dimension of $V_\rho$, that is, $N_0d$. Hence the denominator of  $L(\Ind_\G^G \rho , q u)$ is a polynomial of degree $3dN_0$.

\subsection{A functional equation} In this subsection we prove the functional equation satisfied by $L(\Ind_\G^G \rho , q u)$ as stated in Theorem \ref{thm-functionalequation}. Given an irreducible unramified representation $(\sigma,V)$ with the Satake parameter $s_\rho=\left(\begin{smallmatrix} \lambda_1 & & \\ & \lambda_2 & \\ & & \lambda_3 \end{smallmatrix}\right)$, its contragredient representation $(\sigma^*,V^*)$ has the Satake parameter equal to $s_{\rho}^{-1}$.
Since $\lambda_1\lambda_2\lambda_3=1$, we have
$$ A_1\big|_V = q(\lambda_1+\lambda_2+\lambda_3) =
q(\lambda_2^{-1}\lambda_3^{-1}+\lambda_1^{-1}\lambda_3^{-1}+\lambda_1^{-1}\lambda_2^{-1})
=A_2\big|_{V^*}$$
and
$$A_2\big|_V = q(\lambda_2\lambda_3+\lambda_1\lambda_3+\lambda_1\lambda_2) =q(\lambda_1^{-1}+\lambda_2^{-1}+\lambda_3^{-1}) = A_1\big|_{V^*}.$$
Therefore,
$$ L( \sigma^* , q u) = \det(1-A_2 u + q A_1 u^2 - q^3 u^3|~ V)^{-1}$$
and
\begin{align*}
L(\Ind_\G^G \rho,\frac{1}{q u})
&= \det\left(1-A_1\frac{1}{q^2 u} + A_2\frac{1}{q^3u^2} -\frac{1}{q^3u^3} ~|~(\Ind_\G^G \rho)^K\right)^{-1} \\
&= (-q^3u^3)^{d N_0}\det(1-A_2u + q A_1u^2-q^3 u^3 ~|~(\Ind_\G^G \rho)^K)^{-1} \\
&=  (-q^3u^3)^{d N_0} L( \Ind_\G^G \rho^* , q u) .
\end{align*}
Let
$$ \epsilon( \rho, u) = \left(1-\left(\frac{u}{q}\right)^3\right)^{dN_0/2}\bigg(1-(qu)^3\bigg)^{dN_0/2}.$$
It is easy to verify that
$$ \epsilon(\rho,\frac{1}{q u}) L(\Ind_\G^G \rho,\frac{1}{q u})= \epsilon(\rho^*,q u ) L(\Ind_\G^G \rho^*,qu),$$
which proves Theorem \ref{thm-functionalequation}.

\section{Paths and galleries on the simplicial complex $X$}

These were discussed in detail in \cite{KL}. In this section we recall them and set up notation to be used later.

\subsection{The building $X$ of $\PGL_3(F)$}

The vertices of the building $X$ of $\PGL_3(F)$ are the homothety classes of $\oo$-lattices $a$ in $F^3$.
Given an inclusion relation of lattices $a_1 \supsetneq a_2 \supsetneq \cdots \supsetneq a_r$, denote by $[ a_1 \supsetneq a_2 \supsetneq \cdots \supsetneq a_r]$ the homothety class of this relation. Hence the vertices of $X$, also called the (pointed) $0$-simplicies of $X$, are denoted by $[a]$. Two vertices $[a_1]$ and $[a_2]$ form an edge (or 1-simplex) $E = \{[a_1], [a_2]\}$ of $X$ if there exist representatives $a_1$ and $a_2$
so that $\pi^{-1} a_2 \supsetneq a_1 \supsetneq a_2 (\supsetneq \pi a_1)$.
In this case, $a_1/ a_2$ is a proper subspace of $\pi^{-1}a_2 / a_2 \cong (\mathbb F_q)^3$ with dimension
$$ |a_1/a_2|:= \dim_{\mathbb F_q} a_1/a_2 = 1 \mbox{ or } 2, \quad \mbox{ and} \quad  |a_2/\pi a_1| =
\dim_{\mathbb F_q} a_2/\pi a_1 = 3 - |a_1/a_2|. $$
To $E$, we associate two pointed edges: $[\pi^{-1} a_2 \supsetneq a_1 \supsetneq a_2]$ of type $|a_1/a_2|$, and $[ a_1 \supsetneq a_2 \supsetneq \pi a_1]$ of type $|a_2/\pi a_1|$.
Define the {\it algebraic length} of a pointed edge to be its type.
Three vertices, $[a_1], [a_2]$ and $[a_3]$ form a chamber (or 2-simplex) $C = \{[a_1], [a_2], [a_3]\}$ if there exist representatives $a_1, a_2$ and $a_3$ so that $a_1 \supsetneq a_2 \supsetneq a_3 \supsetneq \pi a_1 (\supsetneq \pi a_2 \supsetneq \pi a_3)$.  In this case, we associate to $C$ three pointed chambers $[\pi^{-1}a_3 \supsetneq a_1 \supsetneq a_2 \supsetneq a_3] $, $[a_1 \supsetneq a_2 \supsetneq a_3 \supsetneq \pi a_1]$, and $[a_2 \supsetneq a_3 \supsetneq \pi a_1 \supsetneq \pi a_2]$.
 The {\it algebraic length} of a pointed chamber $[a_1 \supsetneq a_2 \supsetneq a_3 \supsetneq \pi a_1] $ is defined to be the type of the pointed edge $[a_1 \supsetneq a_3 \supsetneq \pi a_1]$, which is always equal to 1.

An element $g \in G$ acts on the vertices of $X$ by sending $[a]$ to $[ga]$. It preserves edges and chambers, and hence $G$ acts on $X$ as automorphisms.
Note that $K$ is the stabilizer of the vertex represented by the lattice spanned by the standard basis of $F^3$.
 As $G$ acts transitively on vertices of $X$, the coset space $G/K$ parametrizes the vertices of $X$.
{Furthermore, $G$ also acts transitively on pointed edges and pointed chambers and these two sets can be parametrized by cosets of certain parahoric subgroup and Iwahoric subgroup of $G$, respectively. See \cite{KL} for details.}

\subsection{Out-neighbors}
The out-neighbors of a pointed edge $[\pi^{-1} a_2 \supsetneq a_1 \supsetneq a_2]$ of type $|a_1/a_2|$ 
are the pointed edges $[\pi^{-1} a_3 \supsetneq a_2 \supsetneq a_3]$ with type $|a_2/a_3| = |a_1/a_2|$ such that $[a_1], [a_2], [a_3]$ do not form a chamber.
In this case we have two relations 
$a_1 \supsetneq a_2 \supsetneq \pi a_1  \supsetneq \pi a_2$ and $a_1 \supsetneq a_2 \supsetneq a_3 \supsetneq \pi a_2$. The condition $|a_1/a_2|=|a_2/a_3|$ implies that one of $a_3/\pi a_2$ and $\pi a_1/\pi a_2$ is a one-dimensional subspace of $a_2/ \pi a_2 \cong \mathbb{F}_q^3$ and the other is two-dimensional. Denote by $N(e)$ the collection of out-neighbors of a pointed edge $e$.
Therefore, we obtain a criterion for out-neighbors of a pointed edge:
\begin{eqnarray} \label{outerneighbor1}
[\pi^{-1} a_3 \supsetneq a_2 \supsetneq a_3] \in N([\pi^{-1} a_2 \supsetneq a_1 \supsetneq a_2])
& \Leftrightarrow & |a_1/a_2|=|a_2/a_3|, ~ a_3 \not\supset \pi a_1,~ \pi a_1 \not\supset a_3  \\
& \Leftrightarrow & |a_1/a_2|=|a_2/a_3|,~ a_3+ \pi a_1 = a_2,  \nonumber
\end{eqnarray}
where $a_3+\pi a_1$ is the lattice generated by $a_3$ and $\pi a_1$. Observe that a pointed edge has  $q^2$ out-neighbors.

For a pointed chamber $c= [\pi^{-1}a_3 \supsetneq a_1 \supsetneq a_2 \supsetneq a_3]$, its out-neighbors are pointed chambers $[\pi^{-1}a_4 \supsetneq a_2 \supsetneq a_3 \supsetneq a_4]$ with $[a_4]\neq [a_1]$; denote the collection by $N(c)$.
In terms of lattices, we have
\begin{eqnarray}  \label{outerneighbor2}
[\pi^{-1}a_4 \supsetneq a_2 \supsetneq a_3 \supsetneq a_4] \in N([\pi^{-1}a_3 \supsetneq a_1 \supsetneq a_2 \supsetneq a_3]) & \Leftrightarrow & a_4 \neq \pi a_1 \,\, \Leftrightarrow \,\, a_4+\pi a_1 = a_3.
\end{eqnarray}
Hence a pointed chamber has $q$ out-neighbors.

\subsection{Paths and galleries} \label{typeonepath}
An {\it edge path} $\mathfrak{p}$ of $X$ is a sequence   $ e_1 \to e_2 \to \cdots \to e_n$ of pointed edges
in the 1-skeleton of $X$ such that $e_{i+1}$ is an out-neighbor of $e_i$ for $i= 1,..., n-1$; all pointed edges in $\mathfrak{p}$ have the same type $j$, equal to $1$ or $2$, called the type of the path.
We define the geometric length $l_G(\mathfrak{p})$ of $\mathfrak{p}$ to be $n$ and the algebraic length $l_A(\mathfrak{p})$ to be $jn$. \
Note that a path in $X$ is a directed \emph{straight} line segment in an apartment.

A {\it type $1$ gallery} $\mathfrak{g}$ in $X$ is a sequence of pointed chambers $c_1 \to \cdots \to c_n$ in $X$ so that $c_{i+1}$ is an out-neighbor of $c_i$ for $i= 1,..., n-1$. In other words, there exists a sequence of lattices $a_1 \supsetneq \cdots \supsetneq a_{n+2}$ so that $c_i = [\pi^{-1} a_{i+2} \supsetneq a_i \supsetneq a_{i+1} \supsetneq a_{i+2}]$ for $1 \le i \le n$. We define both the geometric length $l_G(\mathfrak{g})$ and the algebraic length $l_A(\mathfrak{g})$ of $\mathfrak{g}$ to be $n$. 
Geometrically a type one gallery is a directed \emph{straight} gallery in an apartment.

For convenience, a type $1$ gallery in $X$ is called a uni-type 2-dimensional geodesic, and an edge path contained in the 1-skeleton of $X$ is called a uni-type 1-dimensional geodesic.

\section{Artin $L$-functions attached to representations of $\G$}

\subsection{The finite quotient $X_\G$} \label{finitecomplex}

Let $\G$ be a discrete cocompact torsion-free subgroup of $G$ so that $X_\G := \G \backslash G$ is a finite simplicial complex locally isomorphic to $X$. Since $X$ is contractible, $\G$ is isomorphic to the fundamental group of $X_\G$.
Explicit constructions of such finite complexes can be found in \cite{Sar} for instance, in which the 1-skeleton of the complexes may be described as Cayley graphs on subgroups of $\PGL_3(\mathbb F_q)$ containing PSL$_3(\mathbb F_q)$.

Denote by  $X_i$ the set of pointed $i$-simplices of $X$ for $i=0, 1, 2$.
The group $\G$ acts freely and transitively on $X_i$ by left translation. 
Fix a choice of a subset $S_i$ of $X_i$ representing the orbit space $\G \backslash X_i$. Then the elements in $X_i$ can be labeled by $\G S_i$. Geometrically the building $X$ is a
maximal unramified cover of $X_\G$ with covering group $\G$. The fibre of an $i$-simplex of $X_\G$ represented by $s \in S_i$ is $\G s$. For the convenience of later discussions, we require that if a pointed $2$-simplex $[\pi^{-1}a_2 \supsetneq a_0 \supsetneq a_1 \supsetneq a_2]$ lies in $S_2$, so do  $[a_0 \supsetneq a_1 \supsetneq a_2 \supsetneq \pi a_0]$ and $[a_1 \supsetneq a_2 \supsetneq \pi a_0 \supsetneq \pi a_1]$; and if a pointed $1$-simplex $[\pi^{-1}a_2 \supsetneq a_1 \supsetneq a_2]$ lies in $S_1$, then so does its opposite $[a_1 \supsetneq a_2 \supsetneq \pi a_1]$. For $i = 0, 1, 2$, the cardinality of $S_i$ is $(i+1)N_i$, where $N_i$ is the number of pointed $i$-simplices in $X_\G$.

\subsection{Cochain groups}

Let $V_\rho[u]$ denote the tensor product $V_\rho \otimes_{\mathbb C} \mathbb C[u]$ of $V_\rho$ with the polynomial ring $\mathbb C[u]$. It is a free $\mathbb C[u]$-module of rank $d$ admitting the action by $\G$ on $V_\rho$.
For each $i \in \{0, 1, 2\}$ denote by $\dot{C}^i(X_\G, \rho) = \dot{C}^i(X_\G, V_\rho[u])$ the space 
\begin{eqnarray*}
\dot{C}^i(X_\G, \rho)
&=&  \{ f: X_i \to V_\rho[u] \, | \, f(\g x_i) = \rho(\g) f(x_i) \, \rm{for~all~} \g \in \G \rm{~and~} x_i \in X_i\}.
\end{eqnarray*}
Note that functions in $\dot{C}^i(X_\G, \rho)$ are determined by their values on $S_i$, hence it is a free module over $\mathbb C[u]$ of rank $d(i+1)N_i.$

\subsection{Vertex Adjacency operators} \label{adjacencyoperator1}

Let $A_1$ and $A_2$ be the vertex adjacency operators on $\dot{C}^0(X_\G, \rho)$ given by
$$ A_i f([a_0]) = \sum_{a_0 \supsetneq b \supsetneq \pi a_0, |a_0/b|=i}f([b]).$$

Since the vertices of $X$ can be parametrized by the cosets $G/K$, functions in $\dot{C}^0(X_\G, \rho)$ as described above are precisely the functions in the space $\Ind_\G^G \rho$ which are right invariant by $K$. Therefore we may identify $\dot{C}^0(X_\G, \rho)$ with the set {$(\Ind_\G^G \rho)^K \otimes \C[u]$.}
Under this identification, the adjacency operators $A_1$ and $A_2$ defined above coincide with the Hecke operators $A_1$ and $A_2$ in \S \ref{lfunction}.
In view of Proposition \ref{Lfunction1}, we conclude
\begin{proposition} \label{Lfunction}
$$ L( \Ind_\G^G \rho, qu ) = \frac{1}{\det(I-A_1 u + q A_2 u^2 - q^3 I u^3 ~|~  \dot{C}^0(X_\Gamma, \rho))}.$$
\end{proposition}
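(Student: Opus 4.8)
The plan is to combine two ingredients already assembled in the excerpt: Proposition~\ref{Lfunction1}, which expresses $L(\Ind_\G^G\rho, qu)$ as the reciprocal of $\det(I - A_1 u + qA_2 u^2 - q^3 u^3 I \mid (\Ind_\G^G\rho)^K)$, and the identification of the cochain group $\dot C^0(X_\G,\rho)$ with $(\Ind_\G^G\rho)^K \otimes_{\mathbb C}\mathbb C[u]$ together with the matching of the combinatorially-defined vertex operators $A_1, A_2$ on $\dot C^0(X_\G,\rho)$ with the Hecke operators of the same name. Granting both, the statement is essentially a bookkeeping identity, but the bookkeeping has to be done carefully.

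First I would spell out the identification $\dot C^0(X_\G,\rho) \cong (\Ind_\G^G\rho)^K \otimes \mathbb C[u]$. An element of $\dot C^0(X_\G,\rho)$ is a $V_\rho[u]$-valued function on $X_0$ that is $\rho$-equivariant for the left $\G$-action; since $X_0 \cong G/K$ as $G$-sets (with $\G$ acting by left translation), such a function is exactly a $V_\rho[u]$-valued function on $G$ that is left-$\rho$-equivariant and right-$K$-invariant, i.e.\ an element of $(\Ind_\G^G\rho)^K$ extended scalars to $\mathbb C[u]$. Under this dictionary the combinatorial operators $A_i f([a_0]) = \sum_{a_0 \supsetneq b \supsetneq \pi a_0,\ |a_0/b| = i} f([b])$ become the standard double-coset (Hecke) operators, because the lattices $b$ with $a_0 \supsetneq b \supsetneq \pi a_0$ and $|a_0/b| = i$ are exactly the neighbours of $[a_0]$ parametrized by the single $K$-double coset defining $A_i$ in \S\ref{lfunction}. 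This is the place where one must be slightly attentive: one should check that the normalization (no extra powers of $q$, correct range of the sum, the fact that $\pi a_0 \subsetneq b$ forces $b/\pi a_0$ to be $(3-i)$-dimensional) matches the Satake-normalized $A_1, A_2$, but this is exactly the content of the sentence in \S\ref{adjacencyoperator1} asserting the coincidence, so I would simply invoke it.

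With the identification in hand, the operator $I - A_1 u + qA_2 u^2 - q^3 u^3 I$ acting on the finite-dimensional space $(\Ind_\G^G\rho)^K$ is literally the same matrix as the operator of the same name on the free $\mathbb C[u]$-module $\dot C^0(X_\G,\rho)$ — extending scalars from $\mathbb C$ to $\mathbb C[u]$ does not change the determinant, which is computed with respect to a $\mathbb C$-basis of $(\Ind_\G^G\rho)^K$ that is simultaneously a $\mathbb C[u]$-basis of $\dot C^0(X_\G,\rho)$. Hence
$$\det\bigl(I - A_1 u + qA_2 u^2 - q^3 I u^3 \mid \dot C^0(X_\G,\rho)\bigr) = \det\bigl(I - A_1 u + qA_2 u^2 - q^3 u^3 I \mid (\Ind_\G^G\rho)^K\bigr),$$
and Proposition~\ref{Lfunction1} identifies the right-hand side with $L(\Ind_\G^G\rho, qu)^{-1}$. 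Taking reciprocals gives the claimed formula, and as a sanity check the degree of this polynomial in $u$ is $3 \cdot \dim(\Ind_\G^G\rho)^K = 3dN_0$, consistent with the remark following Proposition~\ref{Lfunction1} and with the rank $dN_0$ of $\dot C^0(X_\G,\rho)$.

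The only genuine obstacle is verifying that the combinatorial vertex operators really are the Satake-normalized Hecke operators with no discrepancy in powers of $q$ or in which double coset corresponds to $A_1$ versus $A_2$; once that matching is granted (as it is in \S\ref{adjacencyoperator1}), the proposition is immediate from Proposition~\ref{Lfunction1}. I would therefore present the proof as: recall the identification of $\dot C^0(X_\G,\rho)$ with $(\Ind_\G^G\rho)^K \otimes \mathbb C[u]$ and of the operators, observe that determinants are unchanged under the scalar extension $\mathbb C \hookrightarrow \mathbb C[u]$, and quote Proposition~\ref{Lfunction1}.
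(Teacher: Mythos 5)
Your proof matches the paper's: both identify $\dot C^0(X_\G,\rho)$ with $(\Ind_\G^G\rho)^K\otimes\mathbb C[u]$, observe that the combinatorial vertex adjacency operators coincide with the Satake-normalized Hecke operators of \S\ref{lfunction}, and then quote Proposition~\ref{Lfunction1}. The only thing you add is the explicit remark that extending scalars from $\mathbb C$ to $\mathbb C[u]$ leaves the determinant unchanged, which the paper takes as understood.
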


\bigskip

\subsection{Artin $L$-functions attached to representations of $\G$}
For $i = 1, 2$, two closed $i$-dimensional paths in $X_\G$ are called {\it equivalent} if one can be obtained from the other by changing the starting simplex. Denote  by $[c]$ the equivalence class of a closed $i$-dimensional path $c$.
A closed $i$-dimensional path $c$ in $X_\G$ is called
{\it primitive} if it is not obtained by repeating a shorter path more than once; {it is called a uni-type geodesic if it lifts to a uni-type geodesic in $X$.}

Denote by $\mathcal{P}_i^{(n)}$ the set of all $i$-dimensional {uni-type closed geodesics} in $X_\G$ with geometric length $n$, and by $\mathcal{P}_i$ the union of $\mathcal{P}_i^{(n)}$ for $n \ge 1$. Let $\mathcal{P}_i^{pr}$ be the subset of primitive paths in $\mathcal{P}_i$, and
$[\mathcal{P}_i^{pr}]$ be the set of equivalence classes of paths in $\mathcal{P}_i^{pr}$. The elements in $[\mathcal{P}_i^{pr}]$ play the role of primes for the $i$-th zeta and Artin $L$-functions.

Given an element $\mathfrak{p}$ in $\P_1^{(n)}$, its starting pointed edge is represented by a unique $s_0 \in S_1$ and $\mathfrak{p}$ can be uniquely lifted to a uni-type path $\tilde{\mathfrak{p}} : s_0 \to s_1 \to \cdots \to s_n = \g_ \mathfrak{p} s_0$ in $X$, where $\g_ \mathfrak{p} \in \G$. If ${\mathfrak{p}}$ is lifted to a path $ \tilde{\mathfrak{p}}'$ in $X$ starting at $s_0'= \g_0 s_0$, then $ \tilde{\mathfrak{p}}'$ ends at $\g_0 \g_\mathfrak{p} \g_0^{-1}s_0'$. Hence to $\mathfrak{p}$ in $\P_1$ we can associate an element $\g_ \mathfrak{p} \in \G$ which is unique up to conjugation. Note that if $s_0' = \g_\mathfrak{p} s_0$, then $ \tilde{\mathfrak{p}}' = \g_{\mathfrak{p}}\tilde{\mathfrak{p}}$. Thus $\mathfrak{p}$ repeated twice is lifted to the path $s_0 \to s_1 \to \cdots \to s_n = \g_ \mathfrak{p} s_0 \to \g_ \mathfrak{p} s_1 \to \cdots \to \g_ \mathfrak{p}s_n = \g_ \mathfrak{p}^2 s_0$. The projection to $X_\G$ of the sub-paths $s_j \to \cdots \to \g_ \mathfrak{p}s_j$ for $1 \le j \le n-1$ runs through the paths equivalent to $\mathfrak{p}$.
This shows that the conjugacy class $[\g_ \mathfrak{p}]$ in $\G$ of $\g_ \mathfrak{p}$ depends only on the equivalence class $[\mathfrak{p}]$ of $\mathfrak{p}$. When $\mathfrak p$ is primitive, call $[\g_ \mathfrak{p}]$ the \lq\lq Frobenius at the prime $\mathfrak{p}$" and denote it by $Frob_{[\mathfrak p]}$.

In a similar manner, to $\mathfrak{g}$ in $\P_2$ we associate the conjugacy class $[\g_{\mathfrak{g}}]$ and define $Frob_{[\mathfrak g]}$ for each prime $[\mathfrak g] \in [\mathcal{P}_2^{pr}]$. Observe that if $\mathfrak{p}$ in $\P_i$ is obtained from the path $\mathfrak{p}'$ by repeating it $k$-times,
then $l_A(\mathfrak{p})= k\cdot l_A(\mathfrak{p}')$, $l_G(\mathfrak{p})= k \cdot l_G(\mathfrak{p}')$ and $\g_{\mathfrak{p}}=(\g_{\mathfrak{p}'})^k$.

Now fix a $d$-dimensional representation $(\rho,V_\rho)$ of $\Gamma$. For $i=1, 2$ define the $i$-th Artin $L$-function of $X_\G$ associated to $\rho$ to be
\begin{eqnarray}
L_i(X_\G, \rho, u) =   \prod_{[c] \in [\P_i^{Pr}]} \frac{1}{ \det\left( I- \rho(Frob_{[c]}) u^{l_A(c)} \right)}.
\end{eqnarray}
{Note that when $\rho$ is the trivial representation of $\G$, the $i$th Artin $L$-function coincides with the zeta function $Z_i(X_\G, u)$ defined in \cite{KL, KLW}.} Since the determinant of a matrix is invariant under conjugation and two equivalent paths have the same algebraic length, the Artin $L$-function above is well-defined. We shall show in \S4.6 that it converges absolutely for $|u|$ small to the reciprocal of a polynomial.

\subsection{Edge adjacency operator} \label{adjacencyoperator2}
Define the edge adjacency operator $A_E(\rho, u)$ on $\dot{C}^1(X_\G, \rho)$ by sending $f \in \dot{C}^1(X_\G, \rho)$ to $A_E(\rho, u)f$ whose value at $e \in X_1$ is given by
$$ A_E(\rho, u) f(e) = u^{l_A(e)} \sum_{e' \in N(e)} f(e').$$ 
We proceed to represent $A_E(\rho, u)$ by a block matrix $M_E(\rho, u)$ whose rows and columns are parametrized by the set $S_1$ representing the pointed 1-simplices in $X_\G$. Given $s \in S_1$, consider the above definition at $e = s$. Then $e' \in  N(e)$ lies in the $\G$-orbit of some $s'  \in S_1$, and there is a unique $\g_{ss'} \in \G$ such that $e' = \g_{ss'} s'$ and hence $f(e') = \rho(\g_{ss'}) f(s')$. The $ss'$-entry of $M_E(\rho, u)$ is the $d \times d$ matrix $\rho(\g_{ss'}) u^{l_A(s)}$ if $\G s'$ is an out-neighbor of $\G s$ in $X_\G$, and the zero $d \times d$ matrix otherwise.
The block matrix $M_E(\rho, u)$ representing $A_E(\rho, u)$ depends on the choice of the set $S_1$. For a difference choice of $S_1$, the matrix is replaced by a conjugation.

\subsection{A proof of Theorem \ref{z1}}
A closed path $\mathfrak{p}$ in $\P_1^{(n)}$ is a sequence of pointed edges 
$ e_0 \to e_1 \cdots \to e_n = e_0$ of length $n$, where each $e_i$ is represented by a unique $s_i \in S_1$ and $e_{i+1}$ is an out-neighbor of $e_i$ in $X_\G$ for $0 \le i \le n-1$. Its lifting $\tilde{\mathfrak{p}}$ in $X$ starting at $s_0$ is
$$ \tilde{\mathfrak{p}}: s_0 \to \g_{s_0 s_1} s_1 \to  (\g_{s_0 s_1} \g_{s_1 s_2}) s_2 \to \cdots \to (\g_{s_0 s_1}\cdots \g_{s_{n-1}s_n})s_n= \g_ \mathfrak{p} s_0$$
so that the associated $\g_\mathfrak{p}$ explained in \S4.4 is $\g_\mathfrak{p} = \g_{s_0 s_1}\cdots \g_{s_{n-1}s_n}$. Thus we have
$$\rho(\g_\mathfrak{p}) = \rho(\g_{s_0 s_1})\cdots \rho(\g_{s_{n-1}s_n}).$$

\noindent On the other hand, the $s_0 s_0$ entry of $M_E(\rho, u)^n$ is the sum of all possible products of $n$ entries of $M_E(\rho, u)$ of the form $\rho(\g_{s_0 s_1})\rho(\gamma_{s_1s_2})...\rho(\g_{s_{n-1}s_n})u^{l_A(s_0)+l_A(s_1)+\cdots+l_A(s_{n-1})}$, in which $s_n = s_0$, and for $0 \le i \le n-1$, each $\G s_{i+1}$ is an out-neighbor of $\G s_i$. In other words, the $s_0 s_0$ entry of $M_E(\rho, u)^n$ is the sum of $\rho(\g_\mathfrak{p}) u^{l_A(\mathfrak{p})}$ over elements $\mathfrak{p}$ in $\P_1^{(n)}$ starting at $\G s_0$. This shows that
\begin{eqnarray}\label{equaltrace}
  \Tr( M_E(\rho, u)^{n}) = \sum_{\mathfrak{p} \in \P_1^{(n)} } \Tr( \rho(\g_\mathfrak{p})) u^{l_A(\mathfrak{p})}.
 \end{eqnarray}
To proceed, we shall use the following well-known facts in linear algebra.
\begin{proposition}\label{trdet}
Let $A$ be a square matrix over $\C$ with norm less than 1. Then
$$ \log(I-A)= -\sum_{n=1}^\infty \frac{A^n}{n} \,\, \mbox{converges} \qquad \mbox{and} \qquad \Tr(\log(I-A))=\log(\det(I-A)).$$
\end{proposition}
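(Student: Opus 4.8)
The statement is classical; the plan is to check it in two short steps, absolute convergence followed by the trace--determinant identity, with a small amount of care devoted to the branch of the logarithm on the right.

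\emph{Convergence.} Working in the Banach algebra $M_k(\C)$ of $k\times k$ complex matrices with the operator norm, I would use $\|A^n\|\le\|A\|^n$ to get
\[
\sum_{n\ge 1}\frac{\|A^n\|}{n}\ \le\ \sum_{n\ge 1}\frac{\|A\|^n}{n}\ =\ -\log(1-\|A\|)\ <\ \infty,
\]
since $\|A\|<1$. Thus $-\sum_{n\ge1}A^n/n$ converges absolutely; this sum is by definition $\log(I-A)$, and multiplying out the absolutely convergent exponential and logarithmic series in the usual way gives $\exp(\log(I-A))=I-A$.

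\emph{The identity.} The conceptual point is the identity $\det(\exp M)=\exp(\Tr M)$, valid for all $M\in M_k(\C)$: applied to $M=\log(I-A)$ it gives $\det(I-A)=\exp(\Tr\log(I-A))$, so that $\Tr\log(I-A)$ is \emph{a} logarithm of $\det(I-A)$. One proves $\det(\exp M)=\exp(\Tr M)$ by conjugating $M$ (via Schur's theorem) to upper-triangular form with diagonal entries its eigenvalues $\mu_1,\dots,\mu_k$, since both sides are conjugation-invariant and $\exp$ of an upper-triangular matrix is upper-triangular with diagonal $e^{\mu_1},\dots,e^{\mu_k}$; alternatively $y(t)=\det(\exp(tM))$ solves $y'=\Tr(M)\,y$, $y(0)=1$, by Jacobi's formula. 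To identify the correct branch --- which is the content of the equality as stated --- I would put $h(t)=\det(I-tA)$ and $g(t)=\Tr\log(I-tA)=-\sum_{n\ge1}t^n\Tr(A^n)/n$ for $t\in[0,1]$; then $h$ is a nonvanishing polynomial on $[0,1]$ with $h(0)=1$, $g(0)=0$, and Jacobi's formula together with $\sum_{n\ge1}t^{n-1}A^n=A(I-tA)^{-1}$ gives $g'(t)=-\Tr\bigl(A(I-tA)^{-1}\bigr)=h'(t)/h(t)$. Hence $g$ is the continuous logarithm of $h$ on $[0,1]$ normalized by $g(0)=0$, and evaluating at $t=1$ yields $\Tr\log(I-A)=\log\det(I-A)$.

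The only genuine subtlety is this last branch bookkeeping; the differentiation argument disposes of it cleanly, and in the applications of the proposition later in the paper one uses only the exponentiated form $\det(I-A)=\exp\bigl(-\sum_{n\ge1}\Tr(A^n)/n\bigr)$, where no ambiguity of branch enters.
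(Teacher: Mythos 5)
The paper states this proposition as a "well-known fact in linear algebra" and offers no proof, so there is nothing to compare your argument against. Your proof is correct and complete: the norm estimate gives absolute convergence; $\det(\exp M)=\exp(\Tr M)$ via Schur (or Jacobi's formula) shows $\Tr\log(I-A)$ is a logarithm of $\det(I-A)$; and the ODE argument $g'(t)=h'(t)/h(t)$ on $[0,1]$ with $g(0)=0$, $h(0)=1$ (valid since $\|tA\|<1$ keeps $h$ nonvanishing) pins down the branch. You also correctly observe that the exponentiated form $\det(I-A)=\exp(-\sum_{n\ge1}\Tr(A^n)/n)$ is all that is actually invoked in the derivations of Theorems 1.0.1 and 1.0.2, so the branch subtlety never bites there.
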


Order the pointed edges in $S_1$ so that those of type $1$ are before those of type $2$. Then $M_E(\rho,u)$ is of the form
$$ M_E(\rho,u) = \begin{pmatrix} B_1 u & \\  & B_2 u^2\end{pmatrix}$$
for some $dN_1\times d N_1$ complex matrices $B_1$ and $B_2$. The norms of $B_1$ and $B_2$ are bounded and depend only on $A_E(\rho, u)$. It follows from Proposition \ref{trdet} and (\ref{equaltrace}) that, for $|u|<\min\{\|B_1\|^{-1}, \sqrt{\|B_2\|^{-1}}\}$, there holds
\begin{eqnarray*}
\log \det(I- A_E(\rho, u)) &=& \log \det(I- M_E(\rho, u))= \Tr(\log(I- M_E(\rho, u))) =
-\sum_{n=1}^{\infty} \frac{\Tr( M_E(\rho, u)^n )}{n}  \\
&=& - \sum_{n=1}^{\infty} \sum_{\mathfrak{p} \in \P_1^{(n)}}  \frac{\Tr( \rho(\g_\mathfrak{p})) u^{l_A(\mathfrak{p})}}{n}
= -\sum_{\mathfrak{p} \in \P_1}  \frac{\Tr( \rho(\g_\mathfrak{p})) u^{l_A(\mathfrak{p})}}{l_G(\mathfrak{p})} \\
&=& -\sum_{m=1}^\infty \sum_{\mathfrak{p} \in \P_1^{Pr}} \frac{ \Tr( \rho(\g_{\mathfrak{p}^m})) u^{l_A(\mathfrak{p}^m)}} {l_G(\mathfrak{p}^m)}
= -\sum_{m=1}^\infty \sum_{\mathfrak{p} \in \P_1^{Pr}} \frac{ \Tr \left( \rho(\g_\mathfrak{p}) u^{l_A(\mathfrak{p})} \right)^m} {m l_G(\mathfrak{p})}.
\end{eqnarray*}
Since the number of closed paths equivalent to a primitive cycle is equal to its geometric length, the above can be rewritten as
\begin{eqnarray*}
\log \det(I- A_E(\rho, u))  &=& - \Tr\left( \sum_{[\mathfrak{p}] \in [\P_1^{Pr}]} \sum_{m=1}^\infty  \frac{ (\rho(Frob_{[\mathfrak{p}]}) u^{l_A(\mathfrak{p})})^m} {m} \right)\\
&=& \Tr\left( \sum_{[\mathfrak{p}] \in [\P_1^{Pr}]}  \log\left(I-\rho(Frob_{[\mathfrak{p}]}) u^{l_A(\mathfrak{p})}\right)\right)\\
&=&  \log\left( \prod_{[\mathfrak{p}] \in [\P_1^{Pr}]}  \det\left(I-\rho(Frob_{[\mathfrak{p}]}) u^{l_A(\mathfrak{p})}\right)\right)= -\log L_1(X_\G,\rho,u).
\end{eqnarray*}
Exponentiating both sides proves Theorem \ref{z1}.

\subsection{Chamber adjacency operator and a proof of Theorem \ref{z2}} \label{adjacencyoperator3}

Define the chamber adjacency operator $A_C(\rho, u)$ on $ \dot{C}^2(X_\G, \rho)$ by sending $f \in \dot{C}^2(X_\G, \rho)$ to $A_C(\rho, u)f$ whose value at $c \in X_2$ is given by
\begin{eqnarray*}
A_C(\rho, u) f(c) = u^{l_A(c)} \sum_{c' \in N(c)} f(c') = u \sum_{c' \in N(c)} f(c')
\end{eqnarray*}
because all pointed $2$-simplices have algebraic length equal to $1$. Similar to the edge adjacency operator, the chamber adjacency operator $A_C(\rho, u)$ can be represented by a block matrix $M_C(\rho, u)$  whose rows and columns are parametrized by the set $S_2$ representing the pointed 2-simplices in $X_\G$. The $cc'$ entry of $M_C(\rho, u)$ is the $d \times d$ matrix $\rho(\g_{c c'})u$ if $\G c'$ is an out-neighbor of $\G c$ in $X_\G$, and the zero $d \times d$ matrix otherwise.

By an argument similar to the previous subsection, we have that, for $|u|$ small enough,
\begin{eqnarray*}
\log \det(I+ A_C(\rho, u))  &=&  -\log L_2(X_\G,\rho,-u),
\end{eqnarray*}
and hence Theorem \ref{z2} holds.

\section{Pointed simplicial cohomology}
\subsection{Pointed simplicial cohomology groups}
For $i = 0, 1, 2$ denote by $\dot{C}^i(X)$ the free $\mathbb C[u]$-module of functions $f_i: X_i \to V_\rho[u]$. The action of $\G$ on $X_i$ yields the action of $\G$ on $\dot{C}^i(X)$ given by $(\g f_i)(x) = f_i(\g x)$ for all $\g \in \G$, $f_i \in C^i(X)$ and $x \in X_i$. Then $\dot{C}^i(X_\G, \rho)$ consists of the functions in $\dot{C}^i(X)$ on which the action of $\G$ is given by $\rho$.
Define the map $d_i = d_i(u): \dot{C}^i(X)\to \dot{C}^{i+1}(X)$ by
$$ (d_0f_0)([ \pi^{-1} a_1 \supsetneq a_0 \supsetneq a_1])=u^{|a_0/a_1|} f_0([a_1])- f_0([a_0]) , $$
$$ (d_1f_1)([\pi^{-1}a_2 \supsetneq a_0 \supsetneq a_1 \supsetneq a_2 ])=u f_1([\pi^{-1}a_2 \supsetneq a_1 \supsetneq a_2])- f_1([\pi^{-1} a_2 \supsetneq a_0 \supsetneq a_2])+f_1([\pi^{-1} a_1 \supsetneq a_0 \supsetneq a_1]),$$
and all other $d_j$ to be the zero map. It follows from
\begin{align*}
& d_1(d_0f_0)([\pi^{-1}a_2 \supsetneq a_0 \supsetneq a_1 \supsetneq a_2 ])\\
&= u \cdot d_0f_0([\pi^{-1}a_2 \supsetneq a_1 \supsetneq a_2])- d_0f_0([\pi^{-1} a_2 \supsetneq a_0 \supsetneq a_2])+d_0f_0([\pi^{-1} a_1 \supsetneq a_0 \supsetneq a_1]) \\
&=u\left(uf_0([a_2])- f_0([a_1])\right)- \left(u^2 f_0([a_2])- f_0([a_0])\right)+ \left(uf_0([a_1])- f_0([a_0])\right) =0
\end{align*}
 that the $d_i$'s are coboundary  maps. Note that when $u=1$, $d_0$ and $d_1$ are the usual coboundary maps.  As $d_i$ commutes with the action of $\G$, it defines a coboundary map $d_i : \dot{C}^i(X_\G, \rho) \to \dot{C}^{i+1}(X_\G, \rho)$. This gives rise to the $i$-th pointed simplicial cohomology group
$$ \dot{H}^i(X_\G, \rho)= \ker(d_{i}) / \mbox{Im}(d_{i-1}) \qquad {\rm for}~ i = 0, 1, 2,$$
which measures the failure of exactness at $C^i(X_\G, \rho)$ of the cochain complex $$ C^* : 0 {\to} C^0(X_\G, \rho) \stackrel{d_0}{\to} C^1(X_\G, \rho) \stackrel{d_1}{\to} C^2(X_\G, \rho) {\to} 0.$$ \

For $i = 1, 2$ define the map
$\delta_i = \delta_i(u): \dot{C}^i(X)\to \dot{C}^{i-1}(X)$ which sends $f_i \in  \dot{C}^i(X)$ to $\dot{C}^{i-1}(X)$ given by
\begin{eqnarray*}
(\delta_1 f_1)([a_0])&=& \sum_{ a_0 \supsetneq b \supsetneq \pi a_0, |a_0/b|=1} u f_1([ a_0 \supsetneq b \supsetneq \pi a_0])  -\sum_{ a_0 \supsetneq b \supsetneq \pi a_0, |a_0/b|=2} qu^2 f_1([a_0 \supsetneq b \supsetneq \pi a_0]) \\
&=&  \sum_{a_0 \supsetneq b \supsetneq \pi a_0} (-q)^{|a_0/b|-1} u^{|a_0/b|}f_1([ a_0 \supsetneq b \supsetneq \pi a_0])
\end{eqnarray*}
and
$$ (\delta_2 f_2)([\pi^{-1}a_1 \supsetneq a_0 \supsetneq a_1]) = \sum_{a_0 \supsetneq b\supsetneq a_1}  -u f_2([a_0 \supsetneq b \supsetneq a_1 \supsetneq \pi a_0])
+ \sum_{a_1 \supsetneq b \supsetneq \pi a_0}  u^2 f_2([a_1 \supsetneq b \supsetneq \pi a_0 \supsetneq \pi a_1]).$$

\noindent Note that in $\delta_2f_2$ only the first or the second sum is nonempty according as $|a_0/a_1| = 2$ or $1$.
Since $\delta_i$ commutes with the action of $\G$, it defines a map  $\delta_i(u) : \dot{C}^i(X_\G, \rho) \to \dot{C}^{i-1}(X_\G, \rho)$ .

Let
$$\Delta_0(u) = \delta_1(u) d_0(u), \quad \Delta_1(u)= \delta_2(u)  d_1(u) + d_0(u)  \delta_1(u) \qquad \mbox{and}\qquad \Delta_2(u) = d_1(u) \delta_2(u).$$
Observe that $\Delta_i(u)$ is a cochain endomorphism on $\dot{C}^i(X_\G, \rho)$ and
$$\Delta_i(u) \equiv 0 \quad \mbox{on } \dot{H}^i(X_\G, \rho).$$
For $i=0,1$ and 2, define
$$ \Phi_i(u) = \Delta_i(u) + (1- u^3)I,$$
which is also a cochain endomorphism on $\dot{C}^i(X_\G, \rho)$.

Assuming Theorem \ref{thm2}, which will be proved in the next section, we establish Theorem \ref{thm1}. By setting $u=0$, it is obvious that $\Phi_i(u)$ on $\dot{C}^i(X_\G, \rho)$ has nonzero determinant  for $i = 0, 1, 2$. By a general theory on cohomology groups,
we have
\begin{eqnarray*}
\prod_{i=0}^{2}\det(\Phi_i(u) ~|~ \dot{C}^i(X_\G, \rho))^{(-1)^i}
&=& \prod_{i=0}^{2}\det(\Phi_i(u) ~|~  \dot{H}^i(X_\G, \rho))^{(-1)^i}=\prod_{i=0}^{2}\det( (1-u^3)I ~|~ \dot{H}^i(X_\G, \rho))^{(-1)^i}\\
&=& \prod_{i=0}^{2}\det( (1-u^3)I ~|~  \dot{C}^i(X_\G, \rho))^{(-1)^i}=(1-u^3)^{d(N_0-2N_1+3N_2)}.\\
\end{eqnarray*}
On the other hand, by Theorem \ref{thm2}, we also have
\begin{eqnarray*}
\prod_{i=0}^{2}\det(\Phi_i(u) ~|~  \dot{C}^i(X_\G, \rho))^{(-1)^i}
&=& \frac{\det(I-A_1(\rho) u + q A_2 (\rho)u^2 -q^3 u^3 I) (1-u^3)^{2dN_2}\det\left(I+A_C(\rho, u)\right) }{(1-u^3)^{d N_1}\det\left(I-A_E(\rho, u)\right)}\\
&=& (1-u^3)^{d(2N_2-N_1)} \frac{L_1(X_\G,\rho, u)}{L(\Ind^G_\G \rho,qu) L_2(X_\G, \rho, -u)}.
\end{eqnarray*}

\noindent Comparing the above two expressions of the alternating product of $\det(\Phi_i(u))$, we obtain
$$ (1-u^3)^{\chi(X_\G)d} L(\Ind^G_\G \rho, qu) =  \frac{L_1(X_\G,\rho, u)}{L_2(X_\G, \rho, -u)},$$
which is Theorem \ref{thm1}.

\section{A cohomological proof of Theorem \ref{thm2}}

To ease our notation, we shall write $A_E$ for $A_E(\rho, u)$, $A_C$ for $A_C(\rho, u)$, $d_i$ for $d_i(u)$, $\delta_i$ for $\delta_i(u)$, $\Delta_i$ for $\Delta_i(u)$, and $\Phi_i$ for $\Phi_i(u)$ when this will not cause any confusion.

\subsection{The operator $\Phi_0(u)$}

Recall that for a lattice $a_0$, $a_0/\pi a_0 \cong \mathbb{F}_q^3$. In this $3$-dimensional vector space over $\mathbb F_q$ there are $q^2 + q + 1$ lines and the same number of planes. Further, a line  in this space is contained in $q+1$ planes. These results are restated in terms of lattices as follows.

\begin{proposition} \label{counting} $ $\\
(a) Given a lattice $a_0$, the number of pointed edges $[\pi^{-1}b \supsetneq a_0 \supsetneq b]$ in $X_1$ of type $i$ is equal to $q^2+q+1$ for $i= 1$ or $2$.\\
(b) Given a type $1$ pointed edge $[a_0 \supsetneq c \supsetneq \pi a_0]$, there are $q+1$ pointed chambers of the form $[a_0 \supsetneq b \supsetneq c \supsetneq \pi a_0]$.
\end{proposition}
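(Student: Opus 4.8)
The statement to prove is Proposition \ref{counting}, which asserts two elementary counting facts about the finite projective geometry $\mathbb{P}(a_0/\pi a_0) \cong \mathbb{P}^2(\mathbb{F}_q)$, phrased in the language of lattices.

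\textbf{Plan of proof.} The plan is to translate each assertion into the standard combinatorics of the $3$-dimensional $\mathbb{F}_q$-vector space $W := a_0/\pi a_0$ and then quote the classical line/plane counts, checking that the lattice bookkeeping matches up. The one point requiring care is the passage between lattices $b$ with $a_0 \supsetneq b \supsetneq \pi a_0$ and subspaces of $W$: such $b$ correspond bijectively to subspaces $b/\pi a_0 \subseteq W$, with $|a_0/b| = 3 - \dim_{\mathbb{F}_q}(b/\pi a_0)$, so type-$1$ pointed edges $[\pi^{-1}b \supsetneq a_0 \supsetneq b]$ (those with $|a_0/b| = 1$) correspond to $2$-dimensional subspaces (planes) of $W$, and type-$2$ pointed edges to $1$-dimensional subspaces (lines) of $W$.

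For part (a), first I would note that the pointed edge $[\pi^{-1}b \supsetneq a_0 \supsetneq b]$ is determined by $b$ alone once $a_0$ is fixed, since $\pi^{-1}b$ is then forced; hence the count of such pointed edges of type $i$ equals the count of lattices $b$ with $a_0 \supsetneq b \supsetneq \pi a_0$ and $|a_0/b| = i$. By the correspondence above this is the number of subspaces of $W$ of dimension $3-i$. For $i = 2$ this is the number of lines in $\mathbb{P}(W)$, namely $(q^3-1)/(q-1) = q^2+q+1$; for $i = 1$ it is the number of planes in $W$, which by duality (or by the Gaussian binomial coefficient $\binom{3}{2}_q = \binom{3}{1}_q$) is again $q^2+q+1$. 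This gives (a).

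For part (b), a type-$1$ pointed edge $[a_0 \supsetneq c \supsetneq \pi a_0]$ has $|a_0/c| = 1$, so $c/\pi a_0$ is a plane $P \subseteq W$. A pointed chamber $[a_0 \supsetneq b \supsetneq c \supsetneq \pi a_0]$ requires $a_0 \supsetneq b \supsetneq c$, hence $b/\pi a_0$ is a hyperplane of $W$ strictly containing... no — rather $c/\pi a_0 \subsetneq b/\pi a_0 \subsetneq W$ forces $\dim(b/\pi a_0)$ to lie strictly between $\dim P = 2$ and $3$, which is impossible; instead the correct reading is $a_0 \supsetneq b \supsetneq c \supsetneq \pi a_0$ with $|a_0/b| = 1$ and the chamber condition imposing $|b/c| = 1$, so one should think of the flag: $b/\pi a_0$ is a plane containing the line $c/\pi... $. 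I would instead argue directly: the chambers of the form $[a_0 \supsetneq b \supsetneq c \supsetneq \pi a_0]$ correspond to lattices $b$ with $a_0 \supsetneq b \supsetneq c$, and here $c \supsetneq \pi a_0$ with $\dim_{\mathbb{F}_q}(c/\pi a_0) = 2$; such $b$ correspond to the $1$-dimensional subspaces of the $2$-dimensional space $\mathbb{F}_q^3 \supseteq b/\pi a_0 \supseteq \ldots$ — concretely, reducing modulo $c$ one sees $b/c$ ranges over hyperplanes of $a_0/c \cong \mathbb{F}_q^2$... the cleanest formulation: dualize, so that counting $b$ with $a_0 \supsetneq b \supsetneq c$ is counting lines in the $2$-dimensional space $(a_0/c)$, of which there are $q+1$. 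Either way the count is $q+1$. The statement ``a line is contained in $q+1$ planes'' recorded in the preamble is exactly this count via projective duality.

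\textbf{Main obstacle.} The proof is entirely routine; the only real pitfall is keeping the dictionary between lattice inclusions and $\mathbb{F}_q$-subspaces (and the reversal of dimension under $|a_0/b| = 3 - \dim(b/\pi a_0)$, together with the use of duality to identify the plane count with the line count) straight, so that the ``type'' of a pointed edge is matched with the correct dimension. I would present the dictionary once, carefully, and then both parts follow by citing $\binom{3}{k}_q = 1 + q + q^2$ for $k \in \{1,2\}$ and $\binom{2}{1}_q = 1+q$.
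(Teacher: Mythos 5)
Part (a) is correct and matches the paper's own argument, which is simply the translation to lines and planes in $W := a_0/\pi a_0 \cong \mathbb{F}_q^3$.

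Part (b) starts from a misreading of the type. For a pointed edge in the form $[a_1 \supsetneq a_2 \supsetneq \pi a_1]$, the paper defines the type to be $|a_2/\pi a_1|$; so the type of $[a_0 \supsetneq c \supsetneq \pi a_0]$ is $|c/\pi a_0|$, not $|a_0/c|$. Type $1$ therefore means $c/\pi a_0$ is a \emph{line} in $W$ and $|a_0/c| = 2$. Your opening claim ``$|a_0/c| = 1$, so $c/\pi a_0$ is a plane $P$'' is exactly backwards --- and you in fact noticed this, since under that reading the condition $c/\pi a_0 \subsetneq b/\pi a_0 \subsetneq W$ is vacuous. But you never cleanly repair it: the sentence ``here $c \supsetneq \pi a_0$ with $\dim_{\mathbb{F}_q}(c/\pi a_0)=2$'' is immediately followed by ``counting lines in the $2$-dimensional space $a_0/c$,'' and those two statements contradict each other (if $\dim(c/\pi a_0)=2$ then $\dim(a_0/c)=1$). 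The final count $q+1$ is right, but it is reached by inconsistent intermediate claims rather than by the correct dictionary: $c/\pi a_0$ is a line $L\subseteq W$, a chamber $[a_0\supsetneq b\supsetneq c\supsetneq\pi a_0]$ amounts to choosing a plane $b/\pi a_0$ with $L\subsetneq b/\pi a_0\subsetneq W$ (equivalently, a line $b/c$ in the $2$-dimensional $a_0/c$), and there are $q+1$ of these --- precisely the ``a line lies in $q+1$ planes'' fact the paper invokes. Replacing your type identification with $|c/\pi a_0|$ makes the rest of your argument line up with the paper's.
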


Using Proposition \ref{counting}, (a), we compute $\Phi_0 f$ for $f \in \dot{C}^0(X)$:
\begin{eqnarray*}
(\Phi_0(u) f)([a_0]) &= &(\delta_{1}d_{0}+1-u^3)f([a_0])  \\
&=& (1-u^3)f([a_0]) + \sum_{a_0 \supsetneq b \supsetneq \pi a_0} (-q)^{|a_0/b|-1}u^{|a_0/b|}(d_0f)([a_0 \supsetneq b \supsetneq \pi a_0]) \\
&=& (1-u^3)f([a_0])  + \sum_{a_0 \supsetneq b \supsetneq \pi a_0} (-q)^{|a_0/b|-1}u^{|a_0/b|}\left(u^{|b/\pi a_0|} f([\pi a_0]) - f([b])\right) \\
&=& (1-u^3)f([a_0]) + (q^2 + q + 1 -q(q^2 + q + 1))u^3f([a_0]) - u A_1f([a_0]) + qu^2 (A_2f)([a_0]) \\
&=& (I-A_1 u + q A_2 u^2 - q^3 u^3I) f([a_0]).
\end{eqnarray*} In other words, $\Phi_0(u) = I-A_1 u + q A_2 u^2 - q^3 u^3I$ and hence  $\det(\Phi_0(u) ~|~ C^0(X_\G, \rho)) = L(\Ind_\G^G \rho , qu)^{-1}$ by Proposition 4.3.1. This proves Theorem \ref{thm2}, (1).

\subsection{The operator $\Phi_1(u)$}
Introduce the following two operators on $\dot{C}^1(X)$:
\begin{eqnarray*}
Q(u)f([\pi^{-1}a_1 \supsetneq a_0 \supsetneq a_1]) = \sum_{a_0 \supseteq b \supsetneq a_1} u^{|a_0/b|} f([\pi^{-1}a_1 \supsetneq b \supsetneq a_1])
\end{eqnarray*}
and
\begin{eqnarray*}
J_E(u)f([\pi^{-1}a_1 \supsetneq a_0 \supsetneq a_1]) &=& u^{|a_0/a_1|} f([a_0 \supsetneq a_1 \supsetneq \pi a_0]).
\end{eqnarray*}
They preserve the subspace $\dot{C}^1(X_\G, \rho)$, and will be viewed as operators on this space. As such, $J_E = J_E(u)$ is an involution up to scalar, more precisely,
$J_E^2$ is multiplication by $u^3$. A straightforward computation shows that
\begin{eqnarray}\label{AE}
\qquad J_EA_EJ_E^{-1}f([\pi^{-1}a_1 \supsetneq a_0 \supsetneq a_1]) =
u^{|a_1/\pi a_0|} \sum_{a_0 \supsetneq b \supsetneq \pi a_0, |b/\pi a_0| = |a_0/a_1|, a_1 \not\supset b, b \not\supset a_1}f([a_0 \supsetneq b \supsetneq \pi a_0]).
\end{eqnarray}
Furthermore, $Q = Q(u)$ is unipotent with determinant $1$ on $\dot{C}^1(X_\G, \rho)$, hence it is an automorphism there. Under $Q$, the action of $\delta_1$ is much simplified. More precisely, for $f_1 \in \dot{C}^1(X_\G, \rho)$ we have
\begin{eqnarray} \label{simplification-2}
\qquad (\delta_1  Q) f_1([a_0])&=&  \sum_{a_0 \supsetneq b \supsetneq \pi a_0, |a_0/b|=1} u Qf_1([a_0 \supsetneq b \supsetneq \pi a_0])-\sum_{a_0 \supsetneq b \supsetneq \pi a_0,|a_0/b|=2} qu^2 Qf_1([a_0 \supsetneq b \supsetneq \pi a_0]) \\
 &=& \sum_{a_0 \supsetneq b \supsetneq \pi a_0, |a_0/b|=1} u^{|a_0/b|}\sum_{b \supseteq c \supsetneq \pi a_0} u^{|b/c|} f_1([a_0 \supsetneq c \supsetneq \pi a_0])\nonumber\\
 & & -
 \sum_{a_0 \supsetneq b \supsetneq \pi a_0,|a_0/b|=2} qu^2 f_1([a_0 \supsetneq b \supsetneq \pi a_0]) \nonumber \\
 &=& \sum_{a_0 \supsetneq c \supsetneq \pi a_0, |a_0/c|=1} u f_1([a_0 \supsetneq c \supsetneq \pi a_0])+\sum_{a_0 \supsetneq b \supsetneq \pi a_0, |a_0/c|=2} (q+1)u^2 f_1([a_0 \supsetneq c \supsetneq \pi a_0]) \nonumber \\
 & &-\sum_{a_0 \supsetneq b \supsetneq \pi a_0,|a_0/b|=2} qu^2 f_1([a_0 \supsetneq b \supsetneq  \pi a_0]) \nonumber \\
  &=&  \sum_{a_0 \supsetneq c \supsetneq \pi a_0}  u^{|a_0/c|}f_1([a_0 \supsetneq c \supsetneq \pi a_0]). \nonumber
\end{eqnarray}

The operator $$W(u) = I + J_E(u)$$ also simplifies our computations. Write $W$ for $W(u)$ for short. Indeed, for $f_0 \in \dot{C}^0(X_\G, \rho)$, we have

\begin{eqnarray} \label{simplification-1}
(Wd_0) f_0([\pi^{-1}a_1 \supsetneq a_0 \supsetneq a_1]) &=& d_0 f_0([\pi^{-1}a_1 \supsetneq a_0 \supsetneq a_1])+u^{|a_0/a_1|}d_0 f_0([a_0 \supsetneq a_1 \supsetneq \pi a_0]) \\
 &=& u^{|a_0/a_1|}f_0([a_1])-f_0([a_0])+u^{|a_0/a_1|}( u^{|a_1/\pi a_0|}f_0([a_0])-f_0([a_1])) \nonumber\\
 &=& -(1-u^3)f_0([a_0]). \nonumber
\end{eqnarray}
Further, for $f_2 \in \dot{C}^2(X_\G, \rho)$ and a pointed edge $[\pi^{-1}a_1 \supsetneq a_0 \supsetneq a_1] \in S_1$ with $|a_0/a_1|=1$, we have
\begin{eqnarray} \label{simplification-3}
(W \delta_2) f_2([\pi^{-1}a_1 \supsetneq a_0 \supsetneq a_1]) &=& \delta_2 f_2([\pi^{-1}a_1 \supsetneq a_0 \supsetneq a_1])-u\delta_2 f_2([a_0 \supsetneq a_1 \supsetneq \pi a_0]) =0
\end{eqnarray}
and
\begin{eqnarray} \label{simplification-4}
(W \delta_2) f_2([a_0 \supsetneq a_1 \supsetneq \pi a_0]) &=& \delta_2 f_2([a_0 \supsetneq a_1 \supsetneq \pi a_0])-u^2\delta_2 f_2([\pi^{-1}a_1 \supsetneq a_0 \supsetneq a_1]) \\
&=& (1-u^3) \delta_2 f_2([a_0 \supsetneq a_1 \supsetneq \pi a_0]). \nonumber
\end{eqnarray}

Given $f \in C^1(X_\G, \rho)$ we apply the above results to compute $\frac{1}{1-u^3}(W \Phi_1 Q) f([\pi^{-1}a_1 \supsetneq a_0 \supsetneq a_1])$ according to the type $|a_0/a_1|$ of the pointed edge.
\bigskip

\noindent Case I. $|a_0/a_1| = 1$. By (\ref{AE})-(\ref{simplification-3}) we have
\begin{eqnarray}\label{eq1}
& & \frac{1}{1-u^3}(W \Phi_1 Q) f([\pi^{-1}a_1 \supsetneq a_0 \supsetneq a_1]) \\&=& \frac{1}{1-u^3}W((1-u^3)I+\delta_2  d_1 + d_0 \delta_1)Qf([\pi^{-1}a_1 \supsetneq a_0 \supsetneq a_1]) \nonumber \\
&\stackrel{{\rm by} (\ref{simplification-3}),(\ref{simplification-1})}{=}& WQf([\pi^{-1}a_1 \supsetneq a_0 \supsetneq a_1])- \delta_1 Q f([a_0]) \nonumber \\
&\stackrel{{\rm by} (\ref{simplification-2})}{=}& f([\pi^{-1}a_1 \supsetneq a_0 \supsetneq a_1])+ u\sum_{a_1 \supseteq b \supset \pi a_0} u^{|a_1/b|} f([a_0 \supsetneq b \supsetneq \pi a_0]) - \sum_{a_0 \supsetneq c \supsetneq \pi a_0}  u^{|a_0/c|}f([a_0 \supsetneq c \supsetneq \pi a_0]) \nonumber \\
&=& f([\pi^{-1}a_1 \supsetneq a_0 \supsetneq a_1])- \sum_{a_0 \supseteq b \supset \pi a_0, a_1 \not\supseteq b} u^{|a_0/b|} f([a_0 \supsetneq b \supsetneq \pi a_0])\nonumber \\
&\stackrel{{\rm by} (\ref{AE})}{=}& (I - J_EA_EJ_E^{-1})f([\pi^{-1}a_1 \supsetneq a_0 \supsetneq a_1])-\sum_{a_0 \supseteq b \supset \pi a_0, |a_0/b| = 1, b \ne a_1} u f([a_0 \supsetneq b \supsetneq \pi a_0])\nonumber.
\end{eqnarray}

\noindent Case II. $|a_0/a_1|=2$. Applying (\ref{simplification-1}) and (\ref{simplification-2}), we obtain
\begin{eqnarray}  \label{simplification-6}
\frac{1}{1-u^3}(W d_0 \delta_1 Q)f([\pi^{-1}a_1 \supsetneq a_0 \supsetneq a_1])=-\sum_{a_0 \supsetneq c \supsetneq \pi a_0}  u^{|a_0/c|}f([a_0 \supsetneq c \supsetneq \pi a_0]).
\end{eqnarray}
On the other hand,
\begin{eqnarray}\label{simplification-7}
& & \frac{1}{1-u^3}(W \delta_2 d_1 Q)f([\pi^{-1} a_1 \supsetneq a_0 \supsetneq a_1])\\
& \stackrel{{\rm by} (\ref{simplification-4})}{=}& (\delta_2 d_1 Q)f([\pi^{-1} a_1 \supsetneq a_0 \supsetneq a_1])\nonumber \\
&=& \sum_{a_0 \supsetneq b\supsetneq a_1} -u (d_1 Q) f([a_0 \supsetneq b \supsetneq a_1 \supsetneq \pi a_0])\nonumber \\
&=& \sum_{a_0 \supsetneq b\supsetneq a_1}  -u^2 Q f([a_0 \supsetneq a_1 \supsetneq \pi a_0])+ u Q f([a_0 \supsetneq b \supsetneq \pi a_0]) - u Qf([\pi^{-1}a_1 \supsetneq b \supsetneq a_1])\nonumber \\
&=& \sum_{a_0 \supsetneq b\supsetneq a_1} \left( -u^2 f([a_0 \supsetneq a_1 \supsetneq \pi a_0])+  u \sum_{b \supseteq c \supset \pi a_0} u^{|b/c|} f([a_0 \supsetneq c \supsetneq \pi a_0])  - u f([\pi^{-1} a_1 \supsetneq b \supsetneq a_1])\right)\nonumber \\
&=& \sum_{a_0 \supsetneq b\supsetneq a_1} \left(   \sum_{b \supseteq c \supsetneq \pi a_0, c \neq a_1} u^{|a_0/c|} f([a_0 \supsetneq c \supsetneq \pi a_0])\right) -
 \sum_{a_0 \supsetneq b\supsetneq a_1} u^{|a_0/b|} f([\pi^{-1}a_1 \supsetneq b \supsetneq a_1]). \nonumber
\end{eqnarray}

Observe that as $b$ varies among the $q+1$ lattices satisfying $a_0 \supsetneq b\supsetneq a_1$, the sublattices $c$ satisfying $b \supsetneq c \supsetneq \pi a_0$ and $c \ne a_1$ are all distinct since $c + a_1 = b$, so such $c$ runs through the $q^2+q$ sublattices satisfying $a_0 \supsetneq c \supsetneq \pi a_0$ with $|c/\pi a_0| = 1$ and $c \ne a_1$. Adding the above two formulae yields $\frac{1}{1-u^3}(W \Delta_1 Q) f([\pi^{-1}a_1 \supsetneq a_0 \supsetneq a_1])$ on the left side; on the right side the sum over $c$ in (\ref{simplification-6}) and (\ref{simplification-7}) will cancel except for $c = a_1$ and those $c$ satisfying $|a_0/c| = 1$ and $c \not \supset a_1$ (and automatically $a_1 \not \supset c$). This gives
\begin{eqnarray*}
& &\frac{1}{1-u^3}(W \Delta_1 Q) f([\pi^{-1}a_1 \supsetneq a_0 \supsetneq a_1]) \\
&=& -\sum_{a_0 \supsetneq c \supsetneq \pi a_0, |a_0/c| = 1, c \not \supset a_1, a_1 \not \supset c}  u f([a_0 \supsetneq c \supsetneq \pi a_0]) - u^2 f([a_0 \supsetneq a_1 \supsetneq \pi a_0]) - u\sum_{a_0 \supsetneq b\supsetneq a_1} f([\pi^{-1}a_1 \supsetneq b \supsetneq a_1])\\
&=& -(J_EA_EJ_E^{-1})f([\pi^{-1}a_1 \supsetneq a_0 \supsetneq a_1]) - J_EQ f([\pi^{-1}a_1 \supsetneq a_0 \supsetneq a_1])- (Q-I)f([\pi^{-1}a_1 \supsetneq a_0 \supsetneq a_1]).
\end{eqnarray*}
As $\Phi_1 = \Delta_1 + (1 - u^3)I$, the above can be rewritten as
\begin{eqnarray}\label{eq2}
\frac{1}{1-u^3}(W \Phi_1 Q )f([\pi^{-1}a_1 \supsetneq a_0 \supsetneq a_1])
= (I-J_EA_EJ_E^{-1})f([\pi^{-1}a_1 \supsetneq a_0 \supsetneq a_1]).
\end{eqnarray}

Combining (\ref{eq1}) and (\ref{eq2}) yields the following identity on operators
\begin{eqnarray}\label{eq3}
\frac{1}{1-u^3} W \Phi_1 Q = I-J_EA_EJ_E^{-1} - N,
\end{eqnarray}
where $N$ is the operator on $C^1(X_\G, \rho)$ sending $f$ to $Nf$ which is zero at pointed type $2$ edges, and whose value at a pointed type $1$ edge
$[\pi^{-1}a_1 \supsetneq a_0 \supsetneq a_1]$ is given by
$$ Nf([\pi^{-1}a_1 \supsetneq a_0 \supsetneq a_1]) = \sum_{a_0 \supseteq b \supset \pi a_0, |b/\pi a_0| = 2, b \ne a_1} u f([a_0 \supsetneq b \supsetneq \pi a_0]).$$
Thus $N^2 = 0$. As noted before, $(I - J_E)(I + J_E) = I - J_E^2 = (1 - u^3)I$. Hence multiplying both sides of (\ref{eq3}) by $I - J_E$ on the left gives rise to the identity
$$ \Phi_1 = (I - J_E)(I-J_EA_EJ_E^{-1} - N)Q^{-1}.$$

Now we express the determinant of $\Phi_1$ on $C^1(X_\G, \rho)$ in terms of the determinants of the operators on the right hand side on the same space. 
As remarked before $Q$ and hence $Q^{-1}$ have determinant $1$. By pairing off a type $1$ pointed edge $[\pi^{-1} a_1 \supsetneq a_0 \supsetneq a_1] \in S_1$ with its type $2$ opposite $[a_0 \supsetneq a_1 \supsetneq \pi a_0] \in S_1$, we partition the $2N_1$ pointed edges in $S_1$ into $N_1$ pairs and with respect to this basis the operator $I - J_E$ is represented by $N_1$ diagonal block matrices of the form $\begin{pmatrix}I_d & -u^2I_d\\ -uI_d & I_d\end{pmatrix}$, where $I_d$ denotes the identity $d \times d$ matrix. Therefore $\det(I - J_E) = (1 - u^3)^{dN_1}$. Finally to compute the determinant of $I-J_EA_EJ_E^{-1} - N$, we order the pointed edges in $S_1$ by first selecting those of type $1$ then followed by those of type $2$. With respect to this basis, the operator $J_EA_EJ_E^{-1}$ is represented by the diagonal block matrix $\begin{pmatrix} B_1u & 0 \\ 0 & B_2u^2 \end{pmatrix}$ and $J_EA_EJ_E^{-1} + N$ by a lower triangular block matrix $\begin{pmatrix} B_1u & 0 \\ A & B_2u^2 \end{pmatrix}$. Therefore
$$\det(I-J_EA_EJ_E^{-1} - N) = \det(I-J_EA_EJ_E^{-1}) = \det (I - A_E(\rho, u)).$$
Put together, we have shown 
$$\det(\Phi_1(u) ~|~C^1(X_\G, \rho)) = (1 - u^3)^{dN_1}\det (I - A_E(\rho, u))= (1 - u^3)^{dN_1}L_1(X_\G, \rho, u)^{-1},$$
as stated in Theorem \ref{thm2}, (2).

\bigskip

\subsection{The operator $\Phi_2(u)$}

Define the operator $J_C$ on $C^2(X)$ which sends $f \in C^2(X)$ to
$$J_C f ([\pi^{-1}a_2 \supsetneq a_0 \supsetneq a_1 \supsetneq a_2]) =
f([a_0 \supsetneq a_1 \supsetneq a_2 \supsetneq \pi a_0]).$$
It leaves invariant the subspace $\dot C^2(X_\G, \rho)$. Further, for $g \in C^2(X_\G, \rho)$, an easy computation shows
$$J_C A_C J_C^{-1}g([\pi^{-1}a_2 \supsetneq a_0 \supsetneq a_1 \supsetneq a_2]) = \sum_{ a_0 \supsetneq b \supsetneq a_2, ~ b \neq a_1} ug([a_0 \supsetneq b \supsetneq a_2 \supsetneq \pi a_0]).$$

We begin with

\begin{proposition} On $\dot C^2(X_\G, \rho)$ there holds the identity
$$d_1 \delta_2 = (I + J_Cu + J_C^2 u^2)(I + J_CA_CJ_C^{-1}) - (1-u^3)I.$$
Equivalently, $\Phi_2(u) = d_1(u) \delta_2(u) + (1-u^3)I = (I + J_Cu + J_C^2 u^2)(I + J_CA_C(\rho, u)J_C^{-1})$.
\end{proposition}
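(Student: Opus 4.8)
The plan is to verify the operator identity $d_1\delta_2 = (I + J_Cu + J_C^2u^2)(I + J_CA_CJ_C^{-1}) - (1-u^3)I$ on $\dot C^2(X_\G,\rho)$ by direct computation, evaluating both sides on a function $f\in \dot C^2(X_\G,\rho)$ at an arbitrary pointed chamber, say $[\pi^{-1}a_2 \supsetneq a_0 \supsetneq a_1 \supsetneq a_2]$. First I would record the elementary facts about $J_C$: it sends $[\pi^{-1}a_2 \supsetneq a_0 \supsetneq a_1 \supsetneq a_2]$ to $[a_0 \supsetneq a_1 \supsetneq a_2 \supsetneq \pi a_0]$, and applying it three times returns the original pointed chamber, so $J_C^3 = I$ (this is why the factor is $I + J_Cu + J_C^2u^2$ rather than something with higher powers of $u$; one also checks $J_C$ preserves $\dot C^2(X_\G,\rho)$ as stated). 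The three pointed chambers associated to a $2$-simplex $C$ are exactly the $J_C$-orbit, so $J_C$ permutes the fibre basis in $3$-cycles.

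Next I would compute $\delta_2 f$ at the two relevant pointed edges that appear when we apply $d_1$. By the definition of $d_1$,
$$(d_1\delta_2 f)([\pi^{-1}a_2 \supsetneq a_0 \supsetneq a_1 \supsetneq a_2]) = u\,(\delta_2 f)([\pi^{-1}a_2 \supsetneq a_1 \supsetneq a_2]) - (\delta_2 f)([\pi^{-1}a_2 \supsetneq a_0 \supsetneq a_2]) + (\delta_2 f)([\pi^{-1}a_1 \supsetneq a_0 \supsetneq a_1]),$$
and I would expand each $\delta_2 f$ term using its definition, being careful about which of the two sums in $\delta_2 f$ is nonempty according to the type of the pointed edge (for $[\pi^{-1}a_2 \supsetneq a_1 \supsetneq a_2]$ and $[\pi^{-1}a_1 \supsetneq a_0 \supsetneq a_1]$ the type is $1$, and for $[\pi^{-1}a_2 \supsetneq a_0 \supsetneq a_2]$ the type is $2$). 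This produces sums over intermediate lattices $b$, whose images under $f$ are values at pointed chambers in the $J_C$-orbits of $[\pi^{-1}a_2 \supsetneq a_0 \supsetneq a_1 \supsetneq a_2]$ and of the $q$ other pointed chambers out-adjacent to it. On the other hand, I would expand the right-hand side using $J_C A_C J_C^{-1}g([\pi^{-1}a_2 \supsetneq a_0 \supsetneq a_1 \supsetneq a_2]) = \sum_{a_0 \supsetneq b \supsetneq a_2,\, b\neq a_1} u\,g([a_0 \supsetneq b \supsetneq a_2 \supsetneq \pi a_0])$, multiply out the product $(I + J_Cu + J_C^2u^2)(I + J_CA_CJ_C^{-1})$ using $J_C^3 = I$, and subtract $(1-u^3)I$; the term $-u^3 I$ on the far right combines with the $u^3$ coming from $J_C^3$ in the cross terms.

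The two expansions should match term by term, grouped according to whether the pointed chamber argument of $f$ lies in the $J_C$-orbit of the base chamber (these account for the $I + J_Cu + J_C^2u^2$ part together with the $-(1-u^3)I$ correction) or is one of the out-neighbours (these account for the $J_CA_CJ_C^{-1}$ part, each picking up the appropriate power of $u$ and a shift by $J_C$ or $J_C^2$). The main obstacle I anticipate is bookkeeping: correctly tracking the powers of $u$ attached to each intermediate lattice via the exponents $|a_0/b|$, $|b/c|$, etc., and correctly identifying each pointed chamber $[a_0 \supsetneq b \supsetneq a_2 \supsetneq \pi a_0]$ (or a $\pi$-shift of it) as $J_C^k$ applied to a representative that appears in $A_C$, so that the coefficients line up exactly. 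Once the two sides agree, the equivalent reformulation $\Phi_2(u) = (I + J_Cu + J_C^2u^2)(I + J_CA_CJ_C^{-1})$ is immediate from $\Phi_2 = d_1\delta_2 + (1-u^3)I$, completing the proof.
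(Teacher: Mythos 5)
Your proposal is correct and follows essentially the same route as the paper: evaluate both sides at $f$ on a pointed chamber $[\pi^{-1}a_2 \supsetneq a_0 \supsetneq a_1 \supsetneq a_2]$, expand $d_1\delta_2 f$ using the definition of $d_1$ and the fact that only one of the two sums in $\delta_2 f$ is nonempty at each pointed edge according to its type, and match against the expansion of $(I + J_Cu + J_C^2u^2)(I + J_CA_CJ_C^{-1})f - (1-u^3)f$ using $J_C^3 = I$ and the explicit formula for $J_CA_CJ_C^{-1}$. The only difference is that the paper carries out the term-by-term bookkeeping in full, which your outline defers but correctly identifies as the remaining task.
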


\begin{proof}We compare both sides evaluated at $f \in \dot C^2(X_\G, \rho)$. The left hand side is
\begin{eqnarray*}
& &d_1 (\delta_2 f)([\pi^{-1}a_2 \supsetneq a_0 \supsetneq a_1 \supsetneq a_2]) \\
&=& u(\delta_2 f)([\pi^{-1}a_2 \supsetneq a_1 \supsetneq a_2]) - (\delta_2 f)([\pi^{-1}a_2 \supsetneq a_0 \supsetneq a_2]) + (\delta_2 f)([\pi^{-1}a_1 \supsetneq a_0 \supsetneq a_1]) \\
&=& u^3\sum_{a_2 \supsetneq b \supsetneq \pi a_1} f([a_2 \supsetneq b \supsetneq \pi a_1 \supsetneq \pi a_2]) + u\sum_{a_0 \supsetneq b \supsetneq a_2} f([a_0 \supsetneq b \supsetneq a_2 \supsetneq \pi a_0])\\
&+& u^2 \sum_{a_1 \supsetneq b \supsetneq \pi a_0} f([a_1 \supsetneq b \supsetneq \pi a_0 \supsetneq \pi a_1]).
\end{eqnarray*}

For the right hand side, we first compute
\begin{eqnarray*}
& &(I + J_Cu + J_C^2 u^2)(I + J_CA_CJ_C^{-1})f([\pi^{-1}a_2 \supsetneq a_0 \supsetneq a_1 \supsetneq a_2]) \\
&=& (I + J_CA_CJ_C^{-1})f([\pi^{-1}a_2 \supsetneq a_0 \supsetneq a_1 \supsetneq a_2]) +
u (I + J_CA_CJ_C^{-1})f([a_0 \supsetneq a_1 \supsetneq a_2 \supsetneq \pi a_0]) + \\
& & u^2 (I + J_CA_CJ_C^{-1})f([ a_1 \supsetneq a_2 \supsetneq \pi a_0 \supsetneq \pi a_1])\\
&=& f([\pi^{-1}a_2 \supsetneq a_0 \supsetneq a_1 \supsetneq a_2]) + u f([a_0 \supsetneq a_1 \supsetneq a_2 \supsetneq \pi a_0]) + u^2 f([ a_1 \supsetneq a_2 \supsetneq \pi a_0 \supsetneq \pi a_1]) \\
& & + u\sum_{a_0 \supsetneq b \supsetneq a_2, b \ne a_1} f([a_0 \supsetneq b \supsetneq a_2 \supsetneq \pi a_0]) + u^2 \sum_{a_1 \supsetneq b \supsetneq \pi a_0, b \ne a_2} f([a_1 \supsetneq b \supsetneq \pi a_0 \supsetneq \pi a_1]) \\
& & + u^3 \sum_{a_2 \supsetneq b \supsetneq \pi a_1, b \ne \pi a_0}
f([a_2 \supsetneq b \supsetneq \pi a_1 \supsetneq \pi a_2]).
\end{eqnarray*}
Therefore the right hand side is equal to
\begin{eqnarray*}
& &(I + J_Cu + J_C^2 u^2)(I + J_CA_CJ_C^{-1})f([\pi^{-1}a_2 \supsetneq a_0 \supsetneq a_1 \supsetneq a_2]) - (1-u^3)f([\pi^{-1}a_2 \supsetneq a_0 \supsetneq a_1 \supsetneq a_2])\\
&= & u\sum_{a_0 \supsetneq b \supsetneq a_2} f([a_0 \supsetneq b \supsetneq a_2 \supsetneq \pi a_0]) + u^2 \sum_{a_1 \supsetneq b \supsetneq \pi a_0} f([a_1 \supsetneq b \supsetneq \pi a_0 \supsetneq \pi a_1]) \\
& & + u^3 \sum_{a_2 \supsetneq b \supsetneq \pi a_1}
f([a_2 \supsetneq b \supsetneq \pi a_1 \supsetneq \pi a_2])\\
&=& (d_1 \delta_2) f([\pi^{-1}a_2 \supsetneq a_0 \supsetneq a_1 \supsetneq a_2]).
\end{eqnarray*} This proves the proposition.
\end{proof}

Now we compute the determinant of $\Phi_2$.
For $c= [\pi^{-1}a_2 \supsetneq a_0 \supsetneq a_1 \supsetneq a_2]$ in $S_2$, write $c'$ for the pointed chamber $[a_0 \supsetneq a_1 \supsetneq a_2 \supsetneq \pi a_0]$ and $c''$ for $[a_1 \supsetneq a_2 \supsetneq \pi a_0 \supsetneq \pi a_1]$. By our choice of $S_2$, $c, c'$ and $c''$ are all in $S_2$, and the pointed chambers in $S_2$ can be partitioned into $N_2$ disjoint triples $\{c, c', c''\}$. With respect to each triple, the operator $I + J_Cu + J_C^2 u^2$ is represented by the $3d \times 3d$ matrix $\begin{pmatrix}I_d & uI_d & u^2I_d\\ u^2I_d & I_d & uI_d \\ uI_d & u^2I_d & I_d \end{pmatrix}$, which has determinant $(1 - u^3)^{2d}$. Here $I_d$ is the $d \times d$ identity matrix.   Thus $\det (I + J_Cu + J_C^2 u^2 ~|~ C^2(X_\G, \rho)) = ( 1 - u^3)^{2dN_2}$. Combined with $$\det (I + J_CA_C(\rho, u)J_C^{-1}) = \det (I + A_C(\rho, u)) = \det (I - A_C(\rho, -u))= L_2(X_\G, \rho, -u)^{-1},$$ we get  $\det(\Phi_2(u) ~|~ C^2(X_\G, \rho)) = (1 - u^3)^{2dN_2}L_2(X_\G, \rho, -u)^{-1}$, as claimed in Theorem \ref{thm2}, (3).

\subsection{Cohomological interpretation of the proof} The computations in \S6.1-\S6.3 can be rephrased as follows. We define two homomorphisms $\Psi_1 =\{\Psi_{1,i}| i = 0, 1, 2\}$ and $\Psi_2 =\{\Psi_{2,i}| i = 0, 1, 2\}$ from the complex $C^*$ to itself as follows. For the first,
$ \Psi_{1, i} : C^i(X_\G, \rho) \to C^i(X_\G, \rho)$ is multiplication by $1-u^3$ for each $0 \le i \le 2$. It is clear that $\Psi_{1, i+1}d_i = d_i \Psi_{1, i}$ for $i = 0, 1$. Hence $\Psi_1$ is an endomorphism of the complex $C^*$. For the second map, $\Psi_{2, i} : C^i(X_\G, \rho) \to C^i(X_\G, \rho)$ are defined as
\begin{eqnarray*}
\Psi_{2,0} &=& I-A_1 u + q A_2 u^2 - q^3 u^3I \\
\Psi_{2,1} &=& (I - J_E)(I-J_EA_E(\rho, u)J_E^{-1} - N)Q^{-1} \\
\Psi_{2,2} &=& (I + J_Cu + J_C^2 u^2)(I + J_CA_C(\rho, u)J_C^{-1}).
\end{eqnarray*}
That $\Psi_2$ is also an endomorphism of the complex $C^*$ follows from the fact that each $\Psi_{2, i} = \Phi_i$ and $\Phi_i$ have the desired property. Further, the relation $\Psi_{2, i} - \Psi_{1, i} = \Delta_i = d_{i-1} \delta_i + \delta_{i+1} d_i$ for each $i$ shows that $\Psi_1$ and $\Psi_2$ are homotopically equivalent. Therefore Theorem \ref{thm1} holds.

\section{A proof of Theorem \ref{induction}}
For a finite-dimensional representation $(\rho', V_{\rho'})$ of a finite-index subgroup $\G'$ of $\G$,
regard the space $V_\rho$ of the induced representation $\rho = \Ind_{\G'}^{\G} \rho'$ as the set
$$ V_\rho = \{ f: \Gamma \to V_{\rho'}: f(\gamma' \g) = \rho'(\gamma')f(\g) \mbox{ for all $\gamma'\in \Gamma', \g \in \Gamma$}\}$$
on which $\rho$ acts by right translation $\rho(\tilde{\gamma})f(\g)=f(\g \tilde{\gamma}).$ Let $i \in \{1, 2\}$.
Given $g \in \dot{C}^i(X_{\G'},\rho')$, define the function $f_g$ on $X_i$ such that its value at  $x_i \in X_i$ is a function $f_g(x_i) : \G \to V_{\rho'}[u]$ given by
$$f_g(x_i)(\g) := g(\g x_i) \qquad \text{for~all}~ \g \in \G.$$
Then for $\g' \in \G'$, it follows from the definition that
$$f_g(x_i)(\g'\g) = g(\g'\g x_i) = \rho'(\g')g(\g x_i) = \rho'(\g')f_g(x_i)(\g),$$
which shows that $f_g(x_i)$ lies in $V_\rho[u]$. Moreover, for $\tilde{\g} \in \G$,
$$f_g(\tilde \g x_i)(\g) = g(\g \tilde{\g} x_i) = f_g(x_i)(\g \tilde \g) = \rho(\tilde \g)f_g(x_i)(\g)$$
implies that $f_g \in \dot{C}^i(X_{\G},\rho)$. Hence $g \mapsto f_g$ defines a homomorphism $\phi_i : \dot{C}^i(X_{\G'},\rho') \to \dot{C}^i(X_{\G},\rho)$
as $\mathbb C[u]$-modules.
Conversely, for $f \in \dot{C}^i(X_{\G},\rho)$, set
$$ g_f(x_i) := f(x_i)(1) \qquad \text{for~all}~ x_i \in X_i.$$
Then for $\g' \in \G'$,
$$ g_f(\g' x_i) = f(\g' x_i)(1) = \bigg( \rho(\g') f(x_i) \bigg)(1) = f(x_i)(\g') = \rho'(\g') \bigg( f(x_i)(1) \bigg)= \rho'(\g')  g_f(x_i).$$
Thus $g_f$ lies in $\dot{C}^i(X_{\G'},\rho')$.
It is easy to see that $f \mapsto g_f$ defines the inverse map of $\phi_i$ which implies that $\phi_i$ is an isomorphism.

Next we claim that the diagram
$$
\begin{tikzpicture}[every node/.style={midway}]
  \matrix[column sep={10em,between origins}, row sep={5em}] at (0,0) {
    \node(L1) {$\dot{C}^1(X_{\G'},V_{\rho'}[u])$};
    & \node(R1) {$\dot{C}^1(X_{\G},V_{\rho}[u])$}; \\
    \node(L2) {$\dot{C}^1(X_{\G'},V_{\rho'}[u])$};
    & \node (R2) {$\dot{C}^1(X_{\G},V_{\rho}[u])$};\\
  };
  \draw[->] (L1) -- (L2) node[anchor=east]  {$A_E(\rho',u)$};
  \draw[->] (L1) -- (R1) node[anchor=south] {$\phi_1$};
  \draw[->] (R1) -- (R2) node[anchor=west] {$A_E(\rho,u)$};
  \draw[->] (L2) -- (R2) node[anchor=north] {$\phi_1$};
\end{tikzpicture}
$$ commutes. If so, then combined with Theorem \ref{z1}, this gives
$$L_{1}(X_{\G'},\rho', u)= \frac{1}{\det(I- A_E(\rho', u))}= \frac{1}{\det(I- A_E(\rho, u))}=L_{1}(X_\G,\rho, u). $$
To prove the claim, given $g \in \dot{C}^1(X_{\G'},\rho')$, $e \in X_1$, and $\g \in \G$, we examine
\begin{eqnarray*}
f_{A_E(\rho', u)g}(e)(\g)&=& A_E(\rho', u)g(\g e) \\
&=& \sum_{e' \in N(e)} u^{l_A(\g e)}g(\g e') \quad \text{since}~ N(\g e) = \g N(e)\\
&=& \sum_{e' \in N(e)}u^{l_A(e)}f_g(e')(\g) \quad \text{since ~ $\G$~ preserves ~ the~ type ~of ~pointed~ edges} \\
&=& (A_E(\rho, u)f_g)(e)(\g),
\end{eqnarray*}
that is, $f_{A_E(\rho', u)g} = A_E(\rho, u)f_g$, as claimed.

A similar argument proves $ L_2(X_{\G'}, \rho', u) = L_2(X_\G, \rho, u)$.

\end{document}